\newcommand\bR{\mathbb{R}}
\newcommand\bL{\mathbb{L}}
\newcommand\cN{\mathcal{N}}
 \theoremstyle{definition}
\newtheorem{theorem}{Theorem}[section]
\newtheorem{lemma}[theorem]{Lemma}
\newtheorem{corollary}[theorem]{Corollary}
\newtheorem{definition}{Definition}[section]
\newtheorem{example}[theorem]{Example}
\theoremstyle{remark}
\newtheorem{remark}[theorem]{Remark}
\newcommand{\mysection}[1]{\section{#1}
 \setcounter{equation}{0}}
\begin{document}
   \title[Strong Feller property]{A note on the strong Feller property of  diffusion processes}

\author{Timur Yastrzhembskiy}
\email{yastr002@umn.edu}
\address{127 Vincent Hall, University of Minnesota,
 Minneapolis, MN, 55455}
  \begin{abstract}
   In this note we prove the strong Feller property of a  strong Markov quasi diffusion process corresponding to an elliptic operator
with merely bounded measurable coefficients.
We also prove H\"older continuity of  harmonic functions associated with the  quasi diffusion process
and Harnack inequality.
As an application, we show that for such diffusion processes the probabilistic definition
of a regular boundary point coincides with the 'analytic' one.
  The parabolic counterparts of these results are  presented as well.
The proofs are adaptations of arguments from \cite{KrS_79} and \cite{Kr_18}.
  \end{abstract}
  \maketitle
 
  \mysection{Introduction and Main Results}
                                                                                  \label{Section 1}
   Let $d \geq 1$,
  $\bR^d$ be the Euclidean space of points 
  $
     x = (x^1, \ldots, x^d).
  $
	Let $\mathfrak{B}^d$ be the Borel sigma-algebra on $\bR^d$.
	   For $A \in \mathfrak{B}^{d}$ 
	 let
	 $\partial A$ be its boundary,
	 $\bar A$ be its closure, 
	 and  $|A|$ be its Lebesgue measure.
       For $r > 0$
	and $x \in \bR^d$	
       denote 
       $$
                B_r  = \{y \in \bR^d : |y| < r\}, \quad B_r (x) = x + B_r. 
	$$
	For an open set $G$ we denote by $B (G)$
	the set of bounded Borel measurable functions.
	Let $C (\bR^d)$ 
	be	 the space of bounded continuous functions on $\bR^d$, 
	and
	$C^2_0 (\bR^d)$ be the space of  twice continuously differentiable functions 
	 with compact support.

 Let $\nu, K > 0$
 be numbers
and
 $
a (t, x)= (a^{i j} (t, x), i, j = 1, \ldots, d), 
$
 $	
 b (t, x) = (b^i (t, x), i = 1, \ldots, d)
$
 be Borel measurable functions such that,
 for every 
$
  (t, x) \in \bR^{d+1}, \xi \in \bR^d, i, j 
$
        \begin{equation}
                                   \label{1.2}
            \nu |\xi|^2 \leq  a^{k l}(t, x) \xi^k \xi^l \leq \nu^{-1} |\xi|^2,   
\, \, \, 
	a^{i j} \equiv a^{j i}, 
						\quad |b^i (t, x)| < K.
        \end{equation} 
	Denote
 \begin{equation}
				\label{1.1}
	       L =   a^{ ij } D_{ x^i  x^j}  + b^i  D_{x^i},
  \end{equation}	
   and let $\bL^t_{\nu, K}$ be the set of all such operators with coefficients satisfying \eqref{1.2}.

	By $N (\cdot)$ we denote a positive constant depending only on the parameters listed inside the parenthesis.
	
	\textbf{Results in the elliptic case.} First, we consider the case when $a$ and $b$ are independent of $t$.

	Let 
	$
		\Omega 
	$
	be the set of all continuous $\bR^d$-valued functions $x_{\cdot}$, 
	 $
		  \cN_t = \sigma (x_{\eta}, \eta \in [0, t]),
	 $
	  $
		\cN_{\infty} = \sigma (x_t, t \geq 0).
	  $
        Let 
	$
		X = (x_t, \infty, \cN_t, P_{x})
	$
	be a Markov process in the terminology of \cite{Dy_61}.
	We say that $X$ is a quasi diffusion process corresponding to $L$ 
	and denote it by $X (L)$
	if for any $\phi \in C^2_0 (\bR^d), t \in , x \in \bR^d$,
	\begin{equation}
				\label{1.3}
		\phi (x) = E_{x} \phi (x_t) -  E_{x} \int_0^t L  \phi (x_{\eta}) \, d\eta.  
	\end{equation}
	Here by $E_x$ we mean the integral over $\Omega$ with respect to $P_x$. 
	Note that we may replace \eqref{1.3} by the following condition:
	\begin{equation}
				\label{1.4}
		 \zeta_t =  \phi (x_t)  - \phi (x) - \int_0^t L \phi (x_{\eta}) \, d\eta 
	\end{equation}
	is a centered martingale relative to $(\cN_t, t \geq 0)$ on the probability space 
	$
		(\Omega, \cN_{\infty}, P_x).
	$
	The fact that $\xi_t$ is a martingale with zero mean  implies \eqref{1.4}.
	The other implication follows  from the Markov property.

	It is a classical fact that
	that there exists a strong Markov process $X (L)$.
	This result was first proved by N.V. Krylov in \cite{Kr_73}.
	A different proof was later given by R.F. Bass in \cite{B_98}.
	 It is shown in \cite{N_97, S_99} that such strong Markov process is generally not  unique.
	However, under certain regularity assumptions on the leading coefficients,
	 uniqueness does  hold.
	An interested reader can find some classical results in   \cite{SV, B_98} 
	 and some recent developments in \cite{Kr_19}.
	Strong Markov quasi diffusion processes corresponding to elliptic operators with discontinuous coefficients
 	arise naturally	in stochastic optimal control 
	as Markov processes controlled by an optimal Markov policy (see Remark 1.1 of \cite{Kr_86}).

	\textit{Strong Feller property and regular boundary   points of quasi diffusion processes.}

\begin{definition}	We say that a  quasi diffusion process 
  	$ X (L)$ is  strong Feller 
   if for any number $t > 0$
  and 
   $
	f \in B (\bR^d),
  $
	the function
	$
	   T_t f (x): =  E_x   f  (x_{t}) 
	$
       is   continuous  on $\bR^d$.
\end{definition}

In the sequel, the process $X (L)$ is strong Markov.

\begin{theorem}
			\label{theorem 1.1}
For any 
	$f \in B (\bR^d)$,
	$r \in (0, 1]$, 
	$t > 0$, 
	 $x_0 \in \bR^{d}$,
	and $ x_i \in \bar  B_{r} (x_0),  i = 1, 2$
 $$
		|T_t f (x_1) - T_t f (x_2)| \leq  N r^{-\alpha} |x_1 - x_2|^{\alpha} \sup_{ B_{2r} (x_0) } |T_t f|,
 $$
where $N, \alpha$ depend only on $d, \nu$, and $K$.
If $b \equiv 0$, then this conclusion holds for $r > 1$.
  \end{theorem}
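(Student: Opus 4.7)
The plan is to adapt the Krylov-Safonov H\"older regularity argument, viewing $T_t f$ through the lens of the martingale property \eqref{1.4}.

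\textbf{Step 1 (Martingale representation).} Fix $t > 0$ and $x_0 \in \bR^d$, and set $v(s, y) := T_{t-s} f(y)$ for $s \in [0, t]$. Combining \eqref{1.4} with the Markov property shows that, for every bounded stopping time $\sigma \le t$, the process $v(s \wedge \sigma, x_{s \wedge \sigma})$ is a $P_y$-martingale. Taking $\sigma = \tau_y \wedge t$, where $\tau_y$ is the first exit time of $x_\cdot$ from $B_{2r}(x_0)$, gives the representation
$$ T_t f(y) = E_y\, v(\tau_y \wedge t,\, x_{\tau_y \wedge t}), \qquad y \in \bar B_{2r}(x_0). $$

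\textbf{Step 2 (Oscillation decay).} The heart of the proof is to establish
$$ \operatorname{osc}_{B_{r}(x_0)} T_t f \;\le\; (1 - \theta)\, \operatorname{osc}_{B_{2r}(x_0)} T_t f $$
for some $\theta = \theta(d, \nu, K) \in (0, 1)$. I would derive this from a space-time Krylov-Safonov growth lemma applied to the quasi-diffusion $(s, x_s)$ on the cylinder $[0, r^2] \times B_{2r}(x_0)$, of the form: for any closed set $A$ in the base of the cylinder with $|A| \ge \delta |B_r|$,
$$ \inf_{y \in \bar B_{r}(x_0)} P_y(\tau_A \le \tau_y \wedge r^2) \ge \kappa(\delta, d, \nu, K) > 0, $$
where $\tau_A$ is the hitting time of $A$. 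Splitting $B_{2r}(x_0)$ into the super- and sub-level sets of $T_t f$ at their midpoint and combining the growth lemma with the martingale identity from Step 1 yields the oscillation decay.

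\textbf{Step 3 (Iteration).} Iterating the oscillation decay on dyadic balls in the standard way produces, for $x_1, x_2 \in \bar B_r(x_0)$,
$$ |T_t f(x_1) - T_t f(x_2)| \le N r^{-\alpha} |x_1 - x_2|^\alpha \operatorname{osc}_{B_{2r}(x_0)} T_t f, $$
with $\alpha = -\log_2(1 - \theta) > 0$ and $N = N(d, \nu, K)$; the claimed inequality follows from $\operatorname{osc} \le 2 \sup |\cdot|$. When $b \equiv 0$, the operator $L$ is invariant under the parabolic scaling $(s, y) \mapsto (r^2 s, r y)$, so the constants in the growth lemma remain uniform in $r$, and the estimate extends to all $r > 0$.

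\textbf{Main obstacle.} The principal difficulty is Step 2: since $T_t f$ is not $L$-harmonic, the classical elliptic Krylov-Safonov growth lemma does not apply to it directly. The remedy, following \cite{KrS_79, Kr_18}, is to work with the space-time process for which $v(s, y) = T_{t-s}f(y)$ is a space-time martingale, and then to extract a purely \emph{spatial} oscillation decay of $T_t f$ at the single time slice $s = 0$ from the parabolic growth lemma via the martingale identity in Step 1. Getting the right-hand side to involve only $\sup_{B_{2r}(x_0)} |T_t f|$ (and not a supremum of $v$ over the whole space-time cylinder) is the delicate part of this extraction.
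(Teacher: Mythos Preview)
Your overall strategy matches the paper's exactly: set $u(s,x)=E_xf(x_{T-s})=T_{T-s}f(x)$, record the identity $u(s,x)=E_xu(s+\tau,x_\tau)$ for stopping times $\tau\le T-s$ (Markov property), and then repeat the parabolic Krylov--Safonov oscillation argument of Theorem~\ref{theorem 1.5} verbatim.

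There is, however, a gap in your Step~2. You propose to split the \emph{spatial} ball $B_{2r}(x_0)$ into level sets of $T_tf$ and apply a growth lemma to the hitting time $\tau_A$ of $A=\{y:T_tf(y)\le m\}$. This does not close: at the stopping time $\tau_A>0$ the martingale value is $v(\tau_A,x_{\tau_A})=T_{t-\tau_A}f(x_{\tau_A})$, not $T_tf(x_{\tau_A})$, so $x_{\tau_A}\in A$ gives no control on it. The paper (proof of Theorem~\ref{theorem 1.5}) instead takes a \emph{space-time} level set $\Gamma\subset\bar Q_{2R}$ of $u$ and applies Lemma~\ref{lemma 2.4} to $\Gamma$; on $\{\gamma<\tau_s(Q_{2R})\}$ one has $(s+\gamma,x_\gamma)\in\Gamma$ and hence $u\le 0$ there by construction. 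This yields $\operatorname{osc}_{\bar Q_R}u\le\theta\,\operatorname{osc}_{\bar Q_{2R}}u$, an oscillation decay over \emph{cylinders}, not balls.

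Consequently the ``main obstacle'' you flag is real but is not resolved in the paper either: running the argument verbatim produces $\sup_{Q_{2r}(0,x_0)}|u|$ on the right-hand side, i.e.\ the supremum of $T_{t-s}f(y)$ over $s\in[0,4r^2]$, $y\in B_{2r}(x_0)$, rather than $\sup_{B_{2r}(x_0)}|T_tf|$ as stated. The paper does not comment on this discrepancy. For the intended applications (strong Feller property, regular boundary points) the cylinder supremum --- or simply $\|f\|_\infty$ --- suffices, so you should not view this reduction as the crux of the proof; the crux is the space-time oscillation lemma, and for that you must use space-time level sets.
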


	 To state the corollary of this theorem
  	  we denote for $G \subset \bR^d$
	\begin{equation}
				\label{1.6}
		\tau_G = \inf\{t \geq 0: x_t \not \in G\}, \, \inf \emptyset = \infty.
	\end{equation}
	We call $\tau_G$ the first exit time from $G$.
	It is well known that if $G$ is an open set, then $\tau_G$ is a stopping time relative to 
	$
		(\cN_t, t \geq 0).
	$
	If, in addition,  $G$ is bounded, and $x \in G$
	 then, $\tau_G < \infty$ $P_{x}$ a.s.
	(see, for example,  Proposition 1.8.2 of \cite{B_98}).
	Similarly, we define the first exit time from $G$ after $+0$ as follows:
	$$	
		\tau_{G}' = \inf\{t > 0: x_t \not \in G\}.
	$$
  	Further, in the terminology of \cite{Dy_61}
 	a point $x \in \partial G$
is called  regular if
$$
	P_x (\tau_G' = 0) = 1,
$$
and, the same point  is called almost regular
if  
$$
	\lim_{ G  \ni y \to x} E_y g (x_{\tau_G })  = g (x)
$$
for any $g \in B (\bR^d)$ that is continuous at the point $x$.
Further,
 if the coefficients $a^{i j}$ are regular enough, 
say, continuous everywhere,
then, 
 $
	E_y g (x_{\tau_G})
 $ 
coincides with the Perron solution to the Dirichlet problem
  $$
	L u = 0 \, \,   \text{a.e. in} \,  G, 
	\quad  u = g \quad \text{on} \, \partial G.
  $$
	
	The following assertions are used in \cite{KrY_17} to explain the idea behind the construction 
	of operators with discontinuous coefficients.

\begin{corollary}
Let $G \subset \bR^d$ be a bounded domain.
Then, the following assertions hold.

$(i)$ The probability
 $P_x (\tau_G' = 0)$ is either $0$ or $1$.

$(ii)$
A boundary point is regular 
  if and only if it is almost regular.
\end{corollary}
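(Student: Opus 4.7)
For (i), I would apply Blumenthal's $0$-$1$ law. Since $\tau_G'$ is an $(\cN_t)$-stopping time,
$$
\{\tau_G' = 0\} \;=\; \bigcap_{n \geq 1} \{\tau_G' < 1/n\} \;\in\; \cN_{0+} := \bigcap_{t>0} \cN_t,
$$
and the triviality of $\cN_{0+}$ under $P_x$ follows from the Markov property together with continuity of paths ($x_s \to x$ $P_x$-a.s. as $s \downarrow 0$): the standard argument yields that $\cN_{0+}$ is $P_x$-independent of $\sigma(x_t : t > 0) \supseteq \cN_{0+}$, hence trivial. This gives $P_x(\tau_G' = 0) \in \{0, 1\}$.

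For the $(\Rightarrow)$ direction in (ii), fix $g \in B(\bR^d)$ continuous at $x$ and $\epsilon > 0$, and pick $\eta > 0$ so that $|g(z) - g(x)| < \epsilon$ on $B_\eta(x)$. For $y \in G$ with $|y-x| < \eta/2$, path continuity gives the inclusion $\{|x_{\tau_G} - x| \geq \eta\} \subseteq \{\tau_{B_{\eta/2}(x)} \leq \tau_G\}$, whence
$$
|E_y g(x_{\tau_G}) - g(x)| \;\leq\; \epsilon + 2 \|g\|_\infty P_y\bigl(\tau_{B_{\eta/2}(x)} \leq \tau_G\bigr).
$$
The main task is to show the last probability tends to $0$ as $G \ni y \to x$. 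The idea is that regularity of $x$ forces instantaneous exit from $G$ under $P_x$, and the quantitative H\"older/strong Feller control of Theorem \ref{theorem 1.1} transfers this exit behaviour uniformly to $y$ near $x$ via suitable localized exit-problem indicators.

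For the $(\Leftarrow)$ direction, I would argue by contradiction using (i). Assume $\tau_G' > 0$ $P_x$-a.s., and take the test function $g(z) = |z - x| \wedge 1 \in B(\bR^d)$, which is continuous everywhere with $g(x) = 0$. Almost regularity gives $h(y) := E_y g(x_{\tau_G}) \to 0$ as $G \ni y \to x$. For $0 < s < \tau_G'$ we have $x_s \in G$ and $\tau_G' = s + \tau_G \circ \theta_s$, so the Markov property at $s$ yields
$$
E_x\bigl[g(x_{\tau_G'}) \mathbf 1_{\tau_G' > s}\bigr] \;=\; E_x\bigl[h(x_s) \mathbf 1_{\tau_G' > s}\bigr].
$$
Letting $s \downarrow 0$ and using that on $\{\tau_G' > 0\}$ one has $x_s \to x$ through $G$, almost regularity forces $h(x_s) \to 0$; dominated convergence then yields $E_x g(x_{\tau_G'}) = 0$, so $x_{\tau_G'} = x$ $P_x$-a.s. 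The desired contradiction comes from strong Markov at $\tau_G'$ together with polarity of singletons for the quasi-diffusion (existence of a transition density, a consequence of Theorem \ref{theorem 1.1} and standard parabolic estimates for $L \in \bL^t_{\nu, K}$): $P_y(x_t = x) = 0$ for every $y$ and $t > 0$, whereas iterating strong Markov at $\tau_G'$ would produce an infinite sequence of positive times at which the process equals $x$.

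\textbf{Main obstacle.} The quantitative estimate $P_y(\tau_{B_{\eta/2}(x)} \leq \tau_G) \to 0$ in the $(\Rightarrow)$ direction is the principal technical point; it must be extracted from the qualitative regularity of $x$ using Theorem \ref{theorem 1.1} rather than the strong Feller property alone. The closing polarity argument in the $(\Leftarrow)$ direction is more routine, although the one-dimensional case (where singletons are not polar) would need a direct verification --- which is essentially trivial since every boundary point of a bounded open subset of $\bR$ is regular for a non-degenerate $L$.
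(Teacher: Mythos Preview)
The paper's proof is extremely short: it uses Theorem~\ref{theorem 1.1} only to secure the strong Feller property, and then cites Dynkin's book for (i) (strong Feller $\Rightarrow$ Markov with respect to $\cN_{t+}$, hence Blumenthal's zero--one law applies) and for the implication ``regular $\Rightarrow$ almost regular'' in (ii); the converse implication is delegated to Krylov's 1966 paper. Your proposal is more self-contained, essentially reproving pieces of those references, but there are gaps you should be aware of.

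For (i), your appeal to ``Markov property together with continuity of paths'' does not by itself give triviality of $\cN_{0+}$. The standard Blumenthal argument requires the simple Markov property to hold with respect to the enlarged filtration $\cN_{t+}$, and that step genuinely uses some regularity of the semigroup (Feller or strong Feller). The paper is explicit here: strong Feller, via Dynkin's Theorem~3.5, upgrades the process to a Markov process relative to $\cN_{t+}$, after which the zero--one law is immediate. Your sketch omits exactly this point.

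For (ii), your $(\Rightarrow)$ direction is the right shape and, once fleshed out, would amount to a proof of Dynkin's Theorem~13.3 in this setting. The $(\Leftarrow)$ direction, however, has a real gap in the polarity step. Theorem~\ref{theorem 1.1} gives H\"older continuity of $x\mapsto T_t f(x)$; it does \emph{not} produce a transition density, and for quasi-diffusions with merely measurable coefficients (where the martingale problem can be non-unique) the existence of a density is not at all obvious. Moreover, even granting a density, the conclusion $P_y(x_t=x)=0$ for each fixed $t>0$ is much weaker than the polarity statement $P_y(\exists\,t>0:\,x_t=x)=0$ that your iteration argument actually needs. Polarity of points in $d\ge 2$ for uniformly elliptic non-divergence operators \emph{is} true, but the correct route is a Pucci-type barrier (a radial supersolution in an annulus), not transition densities. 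Without that, your contradiction at the end does not close. The paper sidesteps all of this by citing Krylov (1966), whose argument for ``almost regular $\Rightarrow$ regular'' does not rely on polarity at all.
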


\begin{proof}

$(i)$ By Theorem \ref{theorem 1.1} the process $X (L)$ is strong Feller. 
Then, by Theorem 3.5 of \cite{Dy_61}
 $
	(x_t, \infty,  \cN_{t + }, P_x)
 $ is a Markov process.
Note that 
  $\tau_G'$ is an 
    $	
	 \cN_{0+}
   $
 measurable random variable, 
and, hence,  the claim holds by the zero-one law 
  (see Corollary 1 in Section 3 of Chapter 3 of \cite{Dy_61}) ).

$(ii)$  Since $ X (L)$ is strong Feller,  by
Theorem 13.3 of \cite{Dy_61} a regular point is almost regular.
The other implication is proved in Corollary 1 of \cite{Kr_66}. 

\end{proof}

\textit{Results for harmonic functions.}
	
	\begin{definition}
	 We say that $u$ is a harmonic function for
			$
			   X (L)
			$
		on a bounded domain $G$
                          if 
			$
			   u \in B (G),
			$
			 and  for any 
                       domain
			$
			     D \subset G,
			$
			and any 
			$
			  x \in D,
			$
			 one has 
			 $$
			  	u (x) =  E_{x} u ( x_{\tau_{ D}   }).
			$$
	\end{definition}

		\begin{example}
						\label{example 1.1}
		One common example of  harmonic functions
		is an exit distribution for the process $X$ defined as 
		$$
			\pi_G (x, A): = P_x (x_{\tau_G } \in A \cap \partial G), \, A \in \mathfrak{B}^d.
		$$
		By the strong Markov property the function $x \to \pi_{G} (x, A)$ is harmonic for $X (L)$ on $G$.
		Note that the next two results hold for this function.
		\end{example}

	\begin{theorem}
				\label{theorem 1.3}
	Let $r \in (0, 1]$,  $x_0 \in \bR^d$,
	and $u$ be a harmonic function for $X (L)$ on $B_{2r} (x_0)$.
	Then, for any $x, y \in \bar B_{r} (x_0)$
	$$
		|u (x) - u (y)| \leq N r^{-\alpha} |x - y|^{\alpha} \sup_{ B_{2r} (x_0)} |u|
	$$
	with $N, \alpha$ depending only on $d, \nu$, and $K$.
	If $b \equiv 0$, then the conclusion also holds for $r > 1$.
	\end{theorem}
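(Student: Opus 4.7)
The strategy is to express $u$ on $\bar B_r(x_0)$ as $T_t \tilde u$ plus a remainder that vanishes as $t \downarrow 0$, and then apply Theorem \ref{theorem 1.1} to $\tilde u$. Fix an intermediate ball $D = B_{3r/2}(x_0)$, so that $\bar B_r(x_0) \subset D \subset \bar D \subset B_{2r}(x_0)$. Extend $u|_{\bar D}$ to an element $\tilde u \in B(\bR^d)$ by setting $\tilde u \equiv 0$ outside $\bar D$, and write $M = \sup_{B_{2r}(x_0)}|u|$.

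The key step is the identity
\[
u(x) - T_t \tilde u(x) = E_x\bigl[(u(x_{\tau_D}) - \tilde u(x_t))\,\mathbf{1}_{\tau_D \le t}\bigr], \qquad x \in \bar B_r(x_0),\ t > 0.
\]
To derive it, split $u(x) = E_x u(x_{\tau_D})$ according to the events $\{\tau_D \le t\}$ and $\{\tau_D > t\} \in \cN_t$. On the latter event $x_t \in D$, so the Markov property at the deterministic time $t$ combined with the harmonicity of $u$ on the subdomain $D$ yields
\[
E_x\bigl[u(x_{\tau_D})\,\mathbf{1}_{\tau_D > t}\bigr] = E_x\bigl[E_{x_t} u(x_{\tau_D})\,\mathbf{1}_{\tau_D > t}\bigr] = E_x\bigl[u(x_t)\,\mathbf{1}_{\tau_D > t}\bigr],
\]
which is exactly the $\{\tau_D > t\}$-part of $T_t \tilde u(x) = E_x \tilde u(x_t)$, because $\tilde u = u$ on $D$. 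Subtracting produces the identity.

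From $|\tilde u| \le M$ and the identity one has $|u(x_i) - T_t\tilde u(x_i)| \le 2M\,P_{x_i}(\tau_D \le t)$ for $i = 1, 2$, while Theorem \ref{theorem 1.1} applied to $\tilde u$ gives, for $x_1, x_2 \in \bar B_r(x_0)$,
\[
|T_t\tilde u(x_1) - T_t\tilde u(x_2)| \le N r^{-\alpha}|x_1-x_2|^\alpha \sup_{B_{2r}(x_0)}|T_t\tilde u| \le N r^{-\alpha}|x_1-x_2|^\alpha M.
\]
Combining the two bounds and letting $t \downarrow 0$ yields the desired Hölder estimate, since $P_{x_i}(\tau_D \le t) \to 0$ by continuity of paths and the fact that each $x_i$ lies strictly inside the open set $D$. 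The case $b \equiv 0$, $r > 1$ is handled identically once we invoke the corresponding extension of Theorem \ref{theorem 1.1}.

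The only delicate step is the derivation of the identity, which requires applying the Markov property at the deterministic time $t$ to the functional $u(x_{\tau_D})$ on the event $\{\tau_D > t\}$ and then invoking the definition of harmonicity on the subdomain $D \subset G$. Notably, no quantitative control on $P_x(\tau_D \le t)$ is needed: path continuity alone ensures that the remainder vanishes in the limit $t \downarrow 0$.
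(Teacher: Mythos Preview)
Your argument is correct and genuinely different from the paper's. The paper first builds the space-time process $\tilde Y = Y(D_t+L)$ from $X(L)$, reduces via the representation $u(x)=\int_{\partial G} u(y)\,\pi_G(x,dy)$ with $G=B_{3r/2}(x_0)$ to the case $u(x)=\pi_G(x,A)$, and then approximates this exit distribution by the time-truncated functions $v_T(s,x)=P_x(x_{\tau_G}\in A\cap\partial G,\ \tau_G<T)$, which are harmonic for $\tilde Y$ on $[0,T)\times G$; applying the parabolic Theorem~\ref{theorem 1.5} to $v_T$ and sending $T\to\infty$ yields the estimate. Your route bypasses the space-time construction and the reduction to exit measures entirely: you stay in the elliptic setting, invoke Theorem~\ref{theorem 1.1} (which in the paper is proved independently, by repeating the oscillation argument of Theorem~\ref{theorem 1.5}) for the semigroup $T_t\tilde u$, and control the difference $u-T_t\tilde u$ through the identity you derive, letting $t\downarrow 0$. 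Both proofs hinge on a limiting procedure, but yours is more elementary and self-contained on the elliptic side; the paper's approach, on the other hand, illustrates the general device of transferring parabolic interior estimates to elliptic harmonic functions via exit distributions, which is reused verbatim in the proof of Theorem~\ref{theorem 1.4}.
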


	\begin{theorem}[Harnack inequality]
				\label{theorem 1.4}
	Invoke the assumptions of Theorem \ref{theorem 1.3}
	and assume additionally that $u$ is a nonnegative function on $B_{2r} (x_0)$.
	Then, 
	for any $x \in \bar B_{r/2} (x_0)$ we have
	$$
		u (x_0) \leq N (d, \nu, K) u (x).	
	$$
	If $b \equiv 0$,  the assertion also holds  for $r > 1$.
	\end{theorem}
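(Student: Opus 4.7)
The plan is to adapt the Krylov--Safonov scheme from \cite{KrS_79}: rescale, extract a quantitative hitting-probability lemma from the probabilistic toolbox underlying Theorems \ref{theorem 1.1} and \ref{theorem 1.3}, and iterate. First I rescale by $y = r^{-1}(x - x_0)$. Under this change the ellipticity constant $\nu$ is preserved while the drift bound becomes $rK \leq K$ for $r \in (0,1]$; when $b \equiv 0$ the scaling is free for all $r > 0$. With the associated time change $t \mapsto r^2 t$, the process $X(L)$ is converted into a process with operator $L_r \in \bL^t_{\nu, K}$, and $u$ becomes nonnegative and harmonic on $B_2$. It therefore suffices to prove $u(0) \leq N \inf_{\bar B_{1/2}} u$ with $N = N(d,\nu,K)$. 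If $u(0) = 0$ the conclusion is trivial, so after dividing by $u(0)$ I may assume $u(0) = 1$ and aim for $u \geq 1/N$ on $\bar B_{1/2}$.

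The key ingredient I would isolate is a hitting-probability estimate of Krylov--Safonov type: there exist $\gamma, \delta_0 \in (0,1)$ depending only on $d, \nu, K$ such that for every closed set $A \subset \bar B_1$ with $|A| \geq \gamma |B_1|$ and every $x \in \bar B_1$,
\[
  P_x(\tau_A' \leq \tau_{B_2}) \geq \delta_0,
\]
where $\tau_A'$ is the first hitting time of $A$. This should follow from the same quantitative exit-distribution estimate for the process in a ball that drives Theorems \ref{theorem 1.1} and \ref{theorem 1.3}. Granting it, take $A = \{u \geq M\} \cap \bar B_1$ (using a closure/approximation argument for measurability). The harmonic identity $u(x) = E_x u(x_{\tau_{B_2 \setminus A}})$ together with nonnegativity of $u$ yields the dichotomy: either $|A| < \gamma |B_1|$ or $u(x) \geq M \delta_0$ for every $x \in \bar B_1$.

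Iterating this dichotomy from $u(0) = 1$ with $M_k = \delta_0^{-k}$ produces a Marcinkiewicz-type weak bound $|\{u > M\} \cap B_1| \leq C M^{-\alpha} |B_1|$ for some $\alpha = \alpha(d,\nu,K) > 0$. To pass from this weak bound to the pointwise conclusion on $\bar B_{1/2}$, I apply the hitting-probability lemma once more to the set $\{u \geq c\}$, choosing $c$ small enough that the measure of its complement in $B_1$ is less than $\gamma |B_1|$. This forces the process started at any $x \in \bar B_{1/2}$ to enter $\{u \geq c\}$ before exiting $B_2$ with probability at least $\delta_0$, and the harmonic identity again gives $u(x) \geq c\delta_0$ on $\bar B_{1/2}$, completing the proof with $N = (c\delta_0)^{-1}$.

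The main obstacle is establishing the hitting-probability estimate. It is the crux of the Krylov--Safonov theory for operators with merely bounded measurable coefficients, and it rests on a probabilistic form of the Alexandrov maximum principle; the measurable (rather than continuous) nature of $a^{ij}$ is felt most acutely at this step. Since the same estimate is already used implicitly in the proofs of Theorems \ref{theorem 1.1} and \ref{theorem 1.3} (following \cite{KrS_79, Kr_18}), extracting and invoking it here should be the main technical effort; the iteration and final chaining are then standard, and Theorem \ref{theorem 1.3} may be used as a safety net to smooth any density-one level set arguments to pointwise statements.
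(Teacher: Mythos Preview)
The paper does not argue directly in the elliptic setting; it derives Theorem~\ref{theorem 1.4} from the parabolic Harnack inequality (Theorem~\ref{theorem 1.6}). By \eqref{3.10} it suffices to treat $u(x)=\pi_G(x,A)$ with $G=B_{3r/2}$; one then introduces the space--time approximant $v_T(s,x)=P_x(x_{\tau_G}\in A,\ \tau_G<T-s)$, which is harmonic for $Y(D_t+L)$, applies Theorem~\ref{theorem 1.6} to $v_T$, and lets $T\to\infty$. So the route is different from yours, and much shorter once Theorem~\ref{theorem 1.6} is available.

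Your direct approach has a genuine gap in the iteration. The dichotomy is correct, but iterating it with $M_k=\delta_0^{-k}$ does \emph{not} produce a weak bound $|\{u>M\}\cap B_1|\le CM^{-\alpha}$: if the second alternative never occurs you learn only that every set $\{u\ge\delta_0^{-k}\}$ has measure below the fixed threshold $\gamma|B_1|$, with no decay in $k$. Obtaining an honest weak-type decay requires running the hitting estimate at many scales and locations and combining them via a Calder\'on--Zygmund or growing ink-spots covering argument, which is absent from your outline. Worse, the final step is backwards: smallness of $|\{u>M\}|$ for large $M$ gives no information whatsoever about smallness of $|\{u<c\}|$ for small $c$, and it is the latter that you would need in order to apply the hitting lemma to $\{u\ge c\}$. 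The Krylov--Safonov Harnack argument that actually closes (carried out in the paper for the parabolic case) replaces this with Lemma~\ref{lemma 4.1} and a critical-radius blow-up: pick $q$ so that $p(q)$ is close to $1$, find the largest $r_0$ with $\max_{\bar Q_{r_0}}u=(1-r_0)^{-n}u(\text{center})$, use Lemma~\ref{lemma 4.1}(ii) at the maximizer to force a \emph{fixed} fraction of a small cylinder where $u$ is comparably large, and then propagate the lower bound via Lemmas~\ref{lemma 2.4} and~\ref{lemma 2.3}. That piece is the essential missing idea in your plan.
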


	\begin{remark}
	Let us call the assumption that $u$ is harmonic for $X (L)$ on $D = B_r (x_0)$
	by $(A)$
	and
	the one that
	 $u \in B (D)$ and 
	   $
		u (x_{t \wedge \tau_D}), t \geq 0
	   $
	is a martingale relative to $(\cN_t, t \geq 0)$
	by $(A')$.
		
	In  \cite{B_98}
	it is shown that
	Theorems \ref{1.3} and \ref{1.4}
	holds with the assumption $(A)$ replaced by $(A')$
	and the additional constraint $b \equiv 0$ (see Theorems 5.7.5 and 5.7.6 of \cite{B_98}).

	Actually, $(A')$ and $(A)$ are very similar.
	Indeed, by Doob's sampling theorem $(A')$ implies $(A)$.
	Further, assume that $(A)$ holds
	and let 
	$G$ be an open set such that $\bar G \subset D$,
	and $\tau$ be a stopping time.
	Then, by the strong Markov property 
	$$
		u (x) = E_x E_{x_{\tau}} u (x_{\tau_G}) I_{\tau_G > \tau} +  E_x  u (x_{\tau_G}) I_{\tau_G \leq \tau}
	$$
	 $$
		= E_x u (x_{\tau})I_{\tau_G > \tau}  + E_x  u (x_{\tau_G}) I_{\tau_G \leq \tau} = E_x u (x_{\tau \wedge \tau_G}).
	 $$
	Since the equality between the extreme terms hold for any bounded stopping time, 
	$u (x_{t \wedge \tau_G}), t \geq 0$ is a martingale.
	\end{remark}

	\textbf{Results in the parabolic case.}

Let $\omega_t$ be continuous $\bR^{d+1}$-valued function, 
	and $x^0_t$ be its first component, and $x_t$ be the last $d$ components.
	Denote
	$
		\tilde \Omega = C ([0, \infty), \bR^{d+1}),
	$
	 $
		  \tilde \cN_t = \sigma (\omega_r, r \in [0, t]),
	 $
	  $
		\tilde \cN_{\infty} = \sigma (\omega_t, t \geq 0).
	  $
        Let $Y = (\omega_t, \infty, \tilde \cN_t, P_{s, x})$
	be a time-homogeneous Markov process in the terminology of \cite{Dy_61},
	where $s \in \bR, x \in \bR^d$.
	We say that  $Y (D_t + L): = Y$ is a quasi diffusion process corresponding  to $D_t + L$  if
	$$
			x^0_t = s + t, \, \,   P_{s, x} \, \text{a.s.}	,
	$$
	and for any $\phi \in C^2_0 (\bR^{d+1})$ 
	\begin{equation}
	\begin{aligned}
					\label{1.3.1}
		\phi (s, x) =& E_{s, x} \phi (\omega_t) -  E_{s, x} \int_0^t  (D_t + L)  \phi (\omega_{\eta}) \, d\eta\\
		&=  E_{s, x} \phi (s + t, x_t)  - E_{s, x} \int_0^t  (D_t + L) \phi (s + \eta, x_{\eta}) \, d\eta.
	\end{aligned}
	  \end{equation}

	One can show that a  strong Markov quasi diffusion process $Y (D_t + L)$ exists
	  by repeating  the argument of \cite{Kr_73}
	and replacing the  Alexandrov estimate  by  its parabolic counterpart (see Section 2.2 of \cite{Kr_77}).
	The existence of such process also follows from a more general result of \cite{AP_78}.

	In the sequel, we assume that $Y (D_t + L)$ is strong Markov.

\textit{Parabolic analogue of the strong Feller property.}

For $(t_i, x_i), i = 1, 2$
and $T, r > 0$
we denote 
$$
	\rho ((t_1, x_1), (t_2, x_2)) =   |t_1 - t_2|^{1/2} + |x_1 - x_2|.
$$
	$$
		 Q_{T, r} = [0, T) \times B_r,  \, \,   Q_r = Q_{r^2, r}, \, \,   Q_{r} (t, x) = (t, x) + Q_r.
        $$    
Here is the parabolic counterpart of Theorem \ref{theorem 1.1}.
This time, however, 
our result  does not imply that $Y (D_t + L)$ is strong Feller (see Remark \ref{remark 1.1}).

\begin{theorem}
			\label{theorem 1.2}
For $T \in \bR, f \in B (\bR^d)$ denote
	\begin{equation}
				\label{1.7}
		H (s, x) = E_{s, x} f (x_{T-s}), \, \, s \in (-\infty, T), x \in \bR^d.
	\end{equation}
Then, there exist constants $N, \alpha$ depending only on $d, \nu, K$ such that  
for any  
   $
	r \in (0, 1], 
		(s_0, x_0) \in \bR^{d+1}
   $,
	and any 
	$
		(s_i, x_i) \in \bar Q_r (s_0, x_0), i = 1, 2
	$,
$$
	|H (s_1, x_1) - H (s_2, x_2)| \leq N r^{-\alpha} \rho^{\alpha} ((s_1, x_1), (s_2, x_2)) \sup_{Q_{2r} (s_0, x_0) } |H|.
$$
  \end{theorem}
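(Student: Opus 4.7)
The plan is to run the Krylov--Safonov oscillation argument in the parabolic setting, parallel to the proof of Theorem \ref{theorem 1.3}, with $H$ playing the role of a space-time harmonic function for $Y(D_t + L)$. First, I would extend $H$ by $H(T, x) := f(x)$ and observe that, for any bounded stopping time $\tau$ and any $(s, x)$ with $s < T$, the strong Markov property of $Y$, combined with the identity $\omega_\tau = (s + \tau, x_\tau)$ under $P_{s, x}$ and time-homogeneity, yields the caloric identity
$$
H(s, x) = E_{s, x} H(\omega_{\tau \wedge (T - s)}).
$$
This plays the role of the harmonic hypothesis in Theorem \ref{theorem 1.3}.

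The core step is oscillation decay. Fix $(s_0, x_0)$ and $r \in (0, 1]$, let $M_\pm$ denote the sup and inf of $H$ over $Q_{2r}(s_0, x_0)$, and choose a test slab $S \subset Q_{2r}(s_0, x_0)$ lying strictly in the future of $\bar Q_r(s_0, x_0)$, for instance $S = [s_0 + r^2, s_0 + 2 r^2) \times B_r(x_0)$. Split $S$ into $A_\mp = \{\pm H \leq \pm (M_+ + M_-)/2\} \cap S$; at least one, say $A_-$, has Lebesgue measure at least $|S|/2$. For $(s, x) \in \bar Q_r(s_0, x_0)$, apply the caloric identity with $\tau$ equal to the first entry time $\sigma$ of $\omega$ into $A_-$, stopped at the exit $\tau^* = \tau_{Q_{2r}(s_0, x_0)}$. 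Bounding $H \leq M_+$ on $\{\sigma > \tau^*\}$ and $H \leq (M_+ + M_-)/2$ at $\omega_\sigma$ otherwise gives
$$
H(s, x) \leq M_+ - \tfrac{p}{2}(M_+ - M_-), \qquad p := P_{s, x}(\sigma \leq \tau^*).
$$
A uniform lower bound $p \geq p_0(d, \nu, K) > 0$ then forces $\sup_{\bar Q_r} H \leq M_+ - (p_0/2)(M_+ - M_-)$; combined with the symmetric bound for $A_+$ and the standard dyadic iteration, this yields H\"older continuity with exponent $\alpha = \log(1/(1 - p_0/2))/\log 4$.

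The main obstacle is the hitting estimate invoked above: for any Borel $A \subset Q_{2r}(s_0, x_0)$ that lies strictly in the future of $\bar Q_r(s_0, x_0)$ and satisfies $|A| \geq \gamma |Q_r|$, and for any $(s, x) \in \bar Q_r(s_0, x_0)$,
$$
P_{s, x}\bigl(\sigma_A \leq \tau_{Q_{2r}(s_0, x_0)}\bigr) \geq p_0(\gamma, d, \nu, K) > 0,
$$
provided $r \leq 1$. This is the parabolic Krylov--Safonov growth lemma for $Y(D_t + L)$, and I would prove it by constructing, for each such $A$, an auxiliary supermartingale built from a potential of $\mathbf{1}_A$ via the parabolic Alexandrov estimate (Section 2.2 of \cite{Kr_77}), following the strategy of \cite{KrS_79} and \cite{Kr_18}. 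The restriction $r \leq 1$ is needed precisely to absorb the bounded drift $b^i D_{x^i}$ into the uniformly elliptic principal part.

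Finally, to pass from the iterated oscillation bound to the stated pointwise estimate, given two points in $\bar Q_r(s_0, x_0)$ I assume $s_1 \leq s_2$ without loss of generality, set $\rho = \rho((s_1, x_1), (s_2, x_2))$, observe that $(s_2, x_2) \in \bar Q_\rho(s_1, x_1)$, and apply the iterated oscillation estimate on a parabolic cylinder of radius $\rho$ anchored at $(s_1, x_1)$ which, after a routine geometric check, sits inside $Q_{2r}(s_0, x_0)$. This delivers the claimed H\"older bound with constants depending only on $d, \nu, K$.
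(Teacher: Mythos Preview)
Your proposal is correct and follows essentially the same Krylov--Safonov oscillation argument that the paper uses: in the paper Theorem~\ref{theorem 1.2} is deduced immediately from Theorem~\ref{theorem 1.5} after observing (Example~\ref{example 1.2}) that $H$ is harmonic for $Y(D_t+L)$, and the proof of Theorem~\ref{theorem 1.5} is precisely the oscillation-decay scheme you outline, with the hitting estimate supplied by Lemma~\ref{lemma 2.4}. The only cosmetic differences are that the paper works with a compact approximation $\Gamma$ of the sublevel set inside the full cylinder $\bar Q_{2R}$ rather than a future slab, and phrases the hitting time as the first exit from the open set $Q_{2R}\setminus\Gamma$ (avoiding the stopping-time issue for entry into a Borel set) rather than a first entry time.
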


\begin{remark}
			\label{remark 1.1}
Let $f : \bR \to \bR$ be a function discontinuous at $1$.
Then, the  function
$(s, x) \to E_{s, x} f (\omega_1) = f (s+1)$ 
is discontinuous on $\{0\} \times \bR^d$,
and, then, 
$Y (D_t + L)$ is not a strong Feller process.
\end{remark}

The next theorem states that  $Y (D_t + L)$
is a Feller process.

\begin{theorem}
			\label{theorem  1.7}
For any $f \in C (\bR^{d+1})$
and $T > 0$
the function
$u (s, x) = E_{s, x} f (s+T, x_T)$
is of class $C (\bR^{d+1})$.
\end{theorem}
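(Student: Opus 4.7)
The plan is to derive Theorem \ref{theorem  1.7} from the parabolic Hölder estimate of Theorem \ref{theorem 1.2} by a comparison argument that absorbs the time-dependence of $f$. Fix the point $(s_0, x_0) \in \bR^{d+1}$ at which continuity of $u$ is to be verified.

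First, I would reduce to the case of uniformly continuous $f$. A uniform second moment bound of the form
$$
E_{s, x} |x_t - x|^2 \leq N (d, \nu, K) \, t (1 + K^2 t),
$$
can be extracted from \eqref{1.3.1} by applying it to $C^2_0 (\bR^{d+1})$ approximations of the coordinate and quadratic functions. Combined with Chebyshev's inequality, this makes $P_{s, x} (|x_T - x_0| > R)$ arbitrarily small in a neighborhood of $(s_0, x_0)$ once $R$ is large. Replacing $f$ with $f \chi$ for a continuous compactly supported cutoff $\chi$ equal to $1$ on $[s_0 + T - 1, s_0 + T + 1] \times \bar B_R (x_0)$ alters $u$ by at most $2 \|f\|_{\infty} \varepsilon$ on a smaller neighborhood, so we may assume $f$ is uniformly continuous with some modulus $\omega_f$.

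Next, I would introduce a reference function with a fixed absolute terminal time. Set $T^* := s_0 + T$, $g (y) := f (T^*, y) \in B (\bR^d)$, and
$$
H (s, x) := E_{s, x} g (x_{T^* - s}), \quad s < T^*.
$$
By Theorem \ref{theorem 1.2} (applied with $T^*$ in place of $T$, and to forward cylinders based at both $(s_0, x_0)$ and at a point slightly preceding it in time, so as to cover a full neighborhood of $(s_0, x_0)$), $H$ is Hölder and therefore continuous at $(s_0, x_0)$, so $H (s, x) \to H (s_0, x_0) = u (s_0, x_0)$.

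It remains to show $|u (s, x) - H (s, x)| \to 0$ as $(s, x) \to (s_0, x_0)$. Uniform continuity of $f$ yields
$$
|u (s, x) - H (s, x)| \leq E_{s, x} \omega_f \bigl( |(s + T) - T^*| + |x_T - x_{T^* - s}| \bigr) = E_{s, x} \omega_f \bigl( |s - s_0| + |x_T - x_{T^* - s}| \bigr).
$$
The time shift $|s - s_0|$ vanishes by assumption, and the path increment is controlled by the $L^2$ bound $E_{s, x} |x_T - x_{T^* - s}|^2 \leq N |s - s_0|$ over a running-time interval of length $|s - s_0|$. A standard Chebyshev-type splitting (using boundedness of $\omega_f$ and $\omega_f (0+) = 0$) then shows the expectation tends to zero, and the triangle inequality finishes the proof. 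The main obstacle is the moment estimate within the martingale problem framework: since the coefficients are only bounded and measurable, Itô calculus is not directly available, but the estimate can be justified either by invoking the parabolic Aleksandrov estimate of \cite{Kr_77} or by passing to the limit in \eqref{1.3.1} with $C^2_0$-approximations of $y \mapsto |y|^2$.
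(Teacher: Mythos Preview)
Your proof is correct and rests on the same two pillars as the paper's: the parabolic H\"older estimate of Theorem \ref{theorem 1.2} for the spatial variable, and an $L^2$ increment bound $E_{s,x}|x_T - x_{T-h}|^2 \leq N(h + h^2)$ for the time variable. The organization differs, though. The paper avoids your reduction to uniformly continuous $f$ by first invoking L\'evy's continuity theorem: it suffices to show that $(s,x) \mapsto E_{s,x}\exp(i\xi\cdot x_T)$ is continuous for each $\xi$, and since exponentials are globally Lipschitz the modulus-of-continuity bookkeeping disappears. After that, the paper splits $|\phi(s_1,x_1)-\phi(s_2,x_2)|$ into a spatial piece (handled by Theorem \ref{theorem 1.2}) and a temporal piece (handled via the Markov property and the moment bound), which is equivalent to your introduction of the auxiliary function $H$ with frozen terminal time. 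Your direct approach is perhaps more transparent; the characteristic-function trick is a bit slicker and bypasses the localization step.

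One remark on your final paragraph: your worry that ``It\^o calculus is not directly available'' is unfounded in this paper. Lemma \ref{lemma 2.1} shows that on $(\Omega, \tilde\cN_\infty, P_{s,x})$ one can construct a Wiener process $w_t$ so that $x_\cdot$ solves the SDE \eqref{2.1.1}; the moment bound then follows immediately from the It\^o isometry and boundedness of $a, b$, exactly as the paper does in \eqref{3.11}. Your suggested workarounds (Aleksandrov estimate or $C^2_0$-approximation of $|y|^2$) are valid but unnecessary here.
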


	To state the next results we introduce some notation.
	For a set 
	$
		A \subset \bR^{d+1}
	$ 
	we denote by $\partial_p A$ the parabolic boundary of $A$,
	that is, a subset of 
	$\partial A$
	 of all points $(s_0, x_0)$ such that
	there exists a function 
	$
		x_{\cdot} \in C ([0, \infty), \bR^d)
	$
	 and a number $\varepsilon \in (0, s_0]$
	such that
	$(t, x_t) \in  A$ for all $t \in [s_0 - \varepsilon, s_0)$.
        In particular,
	 $$
								   \partial_p Q_r (t, x) = ((t,  t + r^2) \times \partial B_r (x)) \cup (\{ t +  r^2 \} \times \bar B_r (x)).  
         $$

	For a  nonempty open set 
	$Q \subset \bR^{d+1}$
	and $s \in \bR$
	we denote 
	$$
		\tau (Q) = \inf\{t \geq 0: \omega_t \not \in Q\},
	$$
	 	       \begin{equation}
	\begin{aligned}
                                          \label{2.1}
                       \tau_s (Q) =  \inf\{ t \geq 0: (s + t, x_t)  \not \in  Q \}.
	\end{aligned}
               \end{equation}
	We make a few observations. 
	First, $\tau (Q)$ is a stopping time relative to $(\tilde \cN_t, t \geq 0)$,
	and 
	$
		\tau_s (Q)
	$
	 is a stopping time relative to $(\cN_t, t \geq 0)$.
	Second, 
	  for any $s, x,$
	$$
		\tau_s (Q) = \tau (Q)\, \, \,  P_{s, x} \,  \text{a.s.}
	$$
	Third, for any $(s, x) \in Q$ we have
	$$
		\tau_s (Q) = \inf\{t \geq 0: (s + t, x_t) \in \partial_p Q\}.
	$$

	  We say that $u$ is a harmonic function for
			$
			   Y (D_t + L)
			$
			on $Q$
                          if 
			$
			   u \in B (Q)
			$
			and for any domain $G$
			such that
			$G \subset Q$
			and
			$
			 (s, x) \in G,
			$
			 one has 
			 $$
			 	 u (s, x) =  E_{s, x} u (\omega_{\tau (G)}) =   E_{s, x} u (s + \tau_s (G), x_{\tau_s (G)} ).
			 $$

	\begin{example}	
					\label{example 1.2}
	The first example of a harmonic function is an exit distribution
	given by
	$$
		\pi_Q (s, x, A) = P_{s, x} ((s+ \tau_s (Q), x_{\tau_s (Q)}) \in A \cap \partial_p Q), \quad A \in \mathfrak{B}^{d+1}.
	$$
	Indeed, by the strong Markov property the function
	$
		(s, x) \to \pi_Q (s, x, A)
	$
	is harmonic for $Y (D_t + L)$ on $Q$.

	The second example is the function $H (s, x)$
	given by \eqref{1.7}.
	First, we prove that $H (s, x)$ is a measurable function.
	It suffices to show that  a function  
	$
		h (s, x, t) = E_{s, x} f (x_t)
	$
	is measurable.
	By the standard approximation argument
	we may assume that $f$ is bounded and continuous.
	Due to continuity of $x_t$,  
	$h (s, x, t)$ is a continuous function of $t$.
	On the other hand, 
	$h (s, x, t)$ is a measurable function of $(s, x)$
	because so is the transition function  
	$
		P_{s, x} (\omega_t \in A), A \in \mathfrak{B}^{d+1}.
	$
	Combining these two facts, we conclude that $h(s, x, t)$ is measurable.

	Next, 
	 we show that
	$H$ is harmonic for $Y (D_t + L)$
	on 
	$
		(T_0, T) \times D,
	$
	 where $D \subset \bR^d$
	is a bounded set.
	Let $s \in (T_0, T)$.
	By the strong Markov property for any stopping time $\tau \leq T_2 - s$ relative to $(\tilde \cN_t, t \geq 0)$
	$$
		 E_{s, x} f (x_{T - s}) = E_{s, x} E_{s + \tau, x_{\tau}} f (x_{T - s - \tau}) =  E_{s, x} H (s+\tau, x_{\tau}).
	$$  
	This implies the validity of the claim.
	
	\end{example}

	Most of the theorems stated above will be derived from the next two theorems.

	\begin{theorem}
				\label{theorem 1.5}
	Let $r \in (0, 1]$, $(s_0, x_0) \in \bR^{d+1}$, 
	and
	 $u$ be a  harmonic function for $Y (D_t + L)$ on $Q_{2r} (s_0, x_0)$.
	Then, there exist constants $N, \alpha$ depending only on $d, \nu, K$ such 
	that
	 for any   
	$
	 	(s_i, x_i) \in  \bar Q_r (t_0, x_0), i  =1, 2 
	$  
	  \begin{equation}
				\label{3.1}
		|u (s_1, x_1) - u (s_2, x_2)| \leq N r^{-\alpha} \rho^{\alpha} ((s_1, x_1), (s_2, x_2)) \sup_{Q_{2r} (s_0, x_0) } |u|.
	  \end{equation}
 	  
            \end{theorem}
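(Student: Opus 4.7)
The strategy is to adapt the probabilistic Krylov--Safonov scheme of \cite{KrS_79, Kr_18} to the space-time process $Y(D_t + L)$. The H\"older estimate \eqref{3.1} will follow from two ingredients: a growth lemma quantifying hitting probabilities of the process, and a standard iteration of oscillation decay on nested parabolic cylinders.

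The first step, and the main obstacle, is the growth lemma. I would establish that there exist constants $\theta \in (0, 1/2)$ and $\delta \in (0, 1)$, depending only on $d, \nu, K$, such that for every $r \in (0, 1]$, every $(s_0, x_0) \in \bR^{d+1}$, and every Borel set $F \subset \bar Q_r(s_0, x_0)$ with $|F| \geq |Q_r|/2$,
$$
P_{s, x}\bigl((s + t, x_t) \in F \text{ for some } t \leq \tau_s(Q_{2r}(s_0, x_0))\bigr) \geq \delta
$$
for all $(s, x) \in Q_{\theta r}(s_0, x_0)$. The natural tool is the parabolic Aleksandrov-type estimate of Section 2.2 of \cite{Kr_77}, which controls expected occupation-time integrals up to the exit time in terms of an $L^{d+1}$ norm; applying it to a supersolution built from a mollification of $\mathbf{1}_F$ converts the Lebesgue lower bound on $F$ into a probabilistic lower bound on hitting $F$. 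The restriction $r \in (0, 1]$ reflects the non-homogeneous scaling introduced by the bounded drift $b$; when $b \equiv 0$ the operator rescales exactly and the same estimate persists for all $r > 0$.

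From the growth lemma I would deduce oscillation decay: for $u$ harmonic on $Q_{2r}(s_0, x_0)$,
$$
\operatorname{osc}_{Q_{\theta r}(s_0, x_0)} u \leq (1 - \delta/2) \operatorname{osc}_{Q_{2r}(s_0, x_0)} u.
$$
Setting $M = \sup_{Q_{2r}} u$ and $m = \inf_{Q_{2r}} u$, at least one of the sets $\{u \geq (M+m)/2\} \cap Q_r(s_0, x_0)$ and $\{u \leq (M+m)/2\} \cap Q_r(s_0, x_0)$ has measure $\geq |Q_r|/2$; call it $F$. One applies the harmonic identity $u(s, x) = E_{s, x} u(\omega_{\tau_s(G)})$ on a subdomain $G \subset Q_{2r}(s_0, x_0)$ that separates $Q_{\theta r}(s_0, x_0)$ from $F$, and splits the expectation over paths that hit $F$ before exiting $G$ versus those that do not. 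The growth lemma bounds the first probability below by $\delta$, which forces $u$ on $Q_{\theta r}(s_0, x_0)$ to deviate from whichever extremum is attained off $F$ by at least $(\delta/2)(M-m)$, giving the claimed decay.

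Iterating this bound along the geometric sequence $Q_{\theta^k r}(s_1, x_1)$ for $(s_1, x_1) \in \bar Q_r(s_0, x_0)$, and choosing $k$ so that $(s_2, x_2) \in Q_{\theta^k r}(s_1, x_1)$, produces \eqref{3.1} with H\"older exponent $\alpha = -\log(1 - \delta/2)/\log(2/\theta)$. Some care is required to ensure the intermediate cylinders remain inside the original domain $Q_{2r}(s_0, x_0)$ where $u$ is harmonic; this is handled by first reducing to points in $\bar Q_{r/2}(s_0, x_0)$ and a separate interpolation near the boundary of $\bar Q_r(s_0, x_0)$. The bulk of the real work, however, is concentrated in the growth lemma; once it is in hand, everything downstream is a routine iteration.
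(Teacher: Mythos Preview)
Your proposal is correct and follows essentially the same route as the paper: the paper reduces by scaling (Lemma~\ref{lemma 2.2}) to $r=1$, invokes Lemma~\ref{lemma 2.4} (the Krylov--Safonov hitting-probability estimate, proved via the parabolic Aleksandrov estimate as you indicate) to bound below the probability of hitting a compact subset of a sublevel set before exiting, converts this via the harmonic identity into an oscillation-growth inequality $\operatorname{osc}_{\bar Q_{2R}} u \geq N \operatorname{osc}_{\bar Q_R} u$ with $N>1$, and iterates. The only cosmetic difference is that the paper takes $\Gamma$ compact rather than Borel (so the first hitting time is a stopping time) and normalizes so the inner-cylinder oscillation equals $2$, showing the outer one is strictly larger rather than the reverse.
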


	\begin{theorem}[Harnack inequality]
									\label{theorem 1.6}
	Let $(t_0, x_0) \in \bR^{d+1}$, 
	$
		r \in (0, 1], \varepsilon \in (0, 1),
	$
	 and $u$ be a nonnegative harmonic function for $Y (D_t + L)$
	 on  
	 $
		Q_{8r^2, 2r}  (t_0,  x_0).
	$
	Then, there exists a constant $N$ depending only on $d, \nu, K$ and  $\varepsilon$ such that
	$$
		u (r^2, x_0) \leq N \inf_{x \in \bar B_{ (2 - \varepsilon) r } (x_0) } u (t_0, x).
	$$
	If $b \equiv 0$, then the claim holds for $r > 1$.
	\end{theorem}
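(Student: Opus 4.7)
My plan is to follow the classical Krylov--Safonov iterative approach to the parabolic Harnack inequality, adapted to the probabilistic framework of the quasi diffusion $Y(D_t+L)$. After a translation I may assume $(t_0,x_0)=(0,0)$, and dividing $u$ by $u(r^2,0)$ (which may be assumed positive, else the inequality is trivial) reduces the statement to the following: a nonnegative harmonic function $u$ on $Q_{8r^2,2r}$ with $u(r^2,0)=1$ must satisfy $u(0,x)\ge c$ for every $x\in\bar B_{(2-\varepsilon)r}$, with $c=c(d,\nu,K,\varepsilon)>0$.

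The central technical device will be a parabolic growth lemma (measure-to-pointwise estimate) in the spirit of Lemma 2.1 of \cite{KrS_79}: if $u\ge 0$ is harmonic in a parabolic cylinder and, in some ball at a chosen time slice, the set $\{u\ge 1\}$ has at least a fixed positive relative measure, then $u$ is bounded below by a positive constant on a slightly smaller ball at an earlier time. I would prove this probabilistically by invoking the harmonic identity $u(s,x)=E_{s,x}u(\omega_{\tau})$ with $\tau$ the first entry time of the process into $\{u\ge 1\}$ (or the exit time from a suitable auxiliary sub-cylinder), and then using the parabolic Aleksandrov--Krylov estimate for the quasi diffusion (Section 2.2 of \cite{Kr_77}) to convert the measure hypothesis into a positive lower bound on the hitting probability of $\{u\ge 1\}$. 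The ellipticity \eqref{1.2} supplies the required nondegeneracy of $Y$, while the bound $|b|<K$ controls the drift contribution in the Aleksandrov estimate.

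Given the growth lemma, Harnack follows by a chaining iteration. A preliminary application of the harmonic identity together with the Aleksandrov--Krylov estimate converts the pointwise normalization $u(r^2,0)=1$ into a measure lower bound for $\{u\ge\delta_0\}$ on a small parabolic box just below $(r^2,0)$ in time, for some $\delta_0=\delta_0(d,\nu,K)>0$. The growth lemma then produces a pointwise lower bound on a ball at an earlier time. Iterating in steps whose size in $t$ is a fixed fraction of $r^2$ and whose spatial radius grows by a fixed factor, one enlarges the spatial set on which $u$ is bounded below while paying a controlled multiplicative constant at each step. After finitely many iterations, whose count depends only on $\varepsilon$, one arrives at the slice $\{t=0\}$ and covers $\bar B_{(2-\varepsilon)r}$, yielding the desired $c$. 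The case $b\equiv 0$ with $r>1$ follows by the usual dilation since the constants are scale invariant when the drift vanishes.

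I expect the main obstacle to be the probabilistic proof of the growth lemma. In the absence of any a priori PDE regularity of $u$, the stopping times must be chosen so that the process neither exits the working cylinder through a side face before meeting $\{u\ge 1\}$ nor drifts too far in the time coordinate before reaching the intended slice, and the Aleksandrov--Krylov estimate must be arranged so that the resulting hitting probability is bounded below by a constant depending only on the admissible parameters. Once this lemma is in hand the iteration is essentially combinatorial and mirrors the elliptic proof behind Theorem \ref{theorem 1.4}.
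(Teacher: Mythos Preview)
Your outline is in the right spirit but diverges from the paper's route and leaves the decisive spreading step underspecified. The paper does not iterate a growth lemma. After rescaling to $r=1$, $(t_0,x_0)=(0,0)$ via Lemma~\ref{lemma 2.2}, it runs a maximal--scale argument: with $m(\rho)=\max_{\bar Q_\rho(4,0)}u$ and $f(\rho)=(1-\rho)^{-n}u(4,0)$ (here $n$ is the exponent from Lemma~\ref{lemma 2.3}), it takes $\rho_0$ to be the largest root of $m=f$ and picks $(t_1,x_1)\in\bar Q_{\rho_0}(4,0)$ where the maximum is attained. Lemma~\ref{lemma 4.1} (the fact that the capacity-type quantity $p(q)$ tends to $1$ as $q\to1$) is then used in contrapositive form to show that the level set $\{u\ge \tfrac12 f(\rho_0)\}$ occupies a fixed fraction of $Q_{(1-\rho_0)/2}(t_1,x_1)$; the Krylov--Safonov hitting estimate (Lemma~\ref{lemma 2.4}) converts this into a pointwise lower bound for $u$ on the small ball $\bar B_{(1-\rho_0)/4}(x_1)$ at time $t_1$. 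Finally, a \emph{single} application of the oblique-cylinder barrier (Lemma~\ref{lemma 2.3}) transports that lower bound from the small ball at time $t_1$ all the way to $\bar B_{2-\varepsilon}$ at time $0$.

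The gap in your plan is precisely this last spreading step. Your growth lemma, as you formulate it, yields a pointwise bound on a \emph{slightly smaller} ball at an earlier time; yet your iteration is supposed to enlarge the spatial radius by a fixed factor at each step. Passing from control on a small ball at a later time to control on a larger ball at an earlier time is exactly the nontrivial content of Lemma~\ref{lemma 2.3}, which is proved by constructing an explicit subsolution $v=\phi^{2}(\mu(1-t/T)+\gamma^{2})^{-n}$ on a slanted paraboloidal region and applying It\^o's formula. Without such a barrier (or an equivalent heat-kernel lower bound or covering argument) your chaining does not close: each pass of the growth lemma shrinks the ball, and nothing in the outline reverses this. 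If you insist on an iterative format you will still need an auxiliary spreading lemma of this type; the paper's choice is to bypass the iteration entirely by pairing the maximal--scale argument with one oblique-cylinder step.
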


	\begin{remark}
	In the case when $a$ and $b$ are independent of $t$
	and $b \in L_d$ in \cite{Kr_20}
	N.V. Krylov proves the existence of  
	a strong Markov quasi diffusion process corresponding to $L$.
	Further, he shows that this process has 
	strong Feller property (see  Theorem 4.12 of \cite{Kr_20}).
	In fact, it is shown that $T_t f (x)$ is continuous on $(0, T) \times \bR^d$.
	Further, under the aforementioned conditions,
	in Section 6 of \cite{Kr_20}
	the author proves the appropriate versions  of Theorems \ref{theorem 1.3}, \ref{theorem 1.4},  \ref{theorem 1.5},  and \ref{theorem 1.6} of the present paper. 
	\end{remark}

	The rest of the note is organized as follows. 
	In Section \ref{section 2} we prove some lemmas including 
	 a probabilistic version of the  Krylov-Safonov estimate.
	 In Section \ref{section 3} we prove Theorem \ref{theorem 1.5} first, then we  obtain Theorems \ref{theorem 1.1}, \ref{theorem 1.2}, and  \ref{theorem 1.3} as corollaries. 
	In Section \ref{section 4}  we prove the parabolic Harnack inequality and derive the elliptic one from it. 

	Finally, this author would like to thank his advisor N.V. Krylov for the statement of the problem, useful suggestions and attention to this work.

       \mysection{Auxiliary results}
                                                                                         \label{section 2}
     
	The following lemma is a version of Theorem 4.5.1 of \cite{SV}.
	
	\begin{lemma}
				\label{lemma 2.1}
	$(i)$
	Let 
	$Y (D_t + L)$ be a quasi diffusion process.
	Then, for any $(s, x)$ 
	there exists a Wiener process $w_t$ on the probability space  
	 $
		(\Omega,  \tilde \cN_{\infty}, P_{s, x})
	 $
	 such that $x_{\cdot}$ solves the following stochastic differential equation:
	\begin{equation}
				\label{2.1.1}
		x_t = x + \int_0^t b (s + \eta, x_{\eta}) \, d\eta + \int_0^t \sqrt{2 a (s + \eta, x_{\eta}) } \, dw_{\eta} 
	\end{equation}
	on the same probability space.

	$(ii)$ Assume that $a$ and $b$ are independent of $t$
	and let $X (L)$ be a quasi diffusion process.
	Then,  on  $(\Omega, \cN_{\infty}, P_x)$
	there exists a Wiener process $w_t$
	such that  $x_{\cdot}$ satisfies
	$$
		x_t = x  + \int_0^t b (x_{\eta}) \, d\eta + \int_0^t \sqrt{2 a (x_{\eta}) } \, dw_{\eta}. 
	$$
	\end{lemma}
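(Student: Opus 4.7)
The strategy is the standard one going back to Doob and Stroock--Varadhan: use the martingale characterization \eqref{1.3.1} to produce a continuous $\bR^d$-valued martingale $M_t$ with absolutely continuous quadratic variation $2\int_0^t a(s+\eta, x_\eta)\,d\eta$, then invoke uniform ellipticity to extract a Wiener process by inverting $\sqrt{2a}$. I will work with part $(i)$; part $(ii)$ will be a specialization.

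First I would upgrade \eqref{1.3.1} from $\phi\in C^2_0(\bR^{d+1})$ to test functions of the form $\phi(t,y)=y^i$ and $\phi(t,y)=y^iy^j$, which are not compactly supported. To do this I introduce the stopping times $\tau_n=\inf\{t\ge 0: |x_t|\ge n\}$ (which tend to $\infty$ $P_{s,x}$-a.s. by continuity of paths) and test against a sequence $\phi_n\in C^2_0$ which agrees with $y^i$ (resp.\ $y^iy^j$) on $B_n$. Stopping the martingale in \eqref{1.4} (parabolic version) at $\tau_n$ and using that $(D_t+L)\phi_n$ equals $b^i$ (resp.\ $2a^{ij}+b^iy^j+b^jy^i$) on $\{|y|<n\}$, I conclude that
$$
 M^i_t := x^i_t - x^i - \int_0^t b^i(s+\eta, x_\eta)\,d\eta
$$
is a continuous local martingale under $P_{s,x}$, and that $M^i_tM^j_t - \int_0^t 2a^{ij}(s+\eta,x_\eta)\,d\eta$ is also a continuous local martingale. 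In particular $\langle M^i, M^j\rangle_t = \int_0^t 2a^{ij}(s+\eta,x_\eta)\,d\eta$. Since $b$ is bounded and $a$ is bounded, these local martingales are in fact true martingales on any bounded interval.

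Next, because $a$ is uniformly elliptic \eqref{1.2}, its symmetric square root $\sigma(t,y):=\sqrt{2a(t,y)}$ is invertible with $\|\sigma^{-1}\|\le N(\nu,d)$. I would then define
$$
 w_t := \int_0^t \sigma^{-1}(s+\eta, x_\eta)\,dM_\eta,
$$
which makes sense as a vector-valued It\^o integral against the continuous martingale $M$. A direct computation of the quadratic covariation gives $\langle w^i, w^j\rangle_t = \delta^{ij}\,t$, and $w$ is a continuous martingale, so L\'evy's characterization shows $w_t$ is a standard $d$-dimensional Wiener process on $(\Omega,\tilde\cN_\infty, P_{s,x})$. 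Reversing the identity, $dM_t=\sigma(s+t,x_t)\,dw_t$, so $x_t$ satisfies \eqref{2.1.1}.

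The main technical obstacle is the localization step needed to apply the martingale identity to the non-compactly-supported functions $y^i$ and $y^iy^j$; once this is settled, the rest is the Doob representation argument, and no enlargement of the probability space is needed thanks to uniform ellipticity. For part $(ii)$, the same argument runs verbatim with the elliptic martingale \eqref{1.4} in place of the parabolic one, producing a Wiener process on $(\Omega,\cN_\infty,P_x)$ and the claimed time-homogeneous SDE.
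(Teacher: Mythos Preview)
Your proposal is correct and follows essentially the same route as the paper: localize to pass from $C^2_0$ test functions to $y^i$ and $y^iy^j$, read off that $M^i_t=x^i_t-x^i-\int_0^t b^i\,d\eta$ is a continuous local martingale with $\langle M^i,M^j\rangle_t=\int_0^t 2a^{ij}\,d\eta$, then set $w_t=\int_0^t(2a)^{-1/2}\,dM_\eta$ and invoke L\'evy's characterization (the paper cites the equivalent Theorem~3.10.8 of \cite{Kr_95}). One small point: testing with $y^iy^j$ yields only that $x^i_tx^j_t-\int_0^t(2a^{ij}+b^ix^j_\eta+b^jx^i_\eta)\,d\eta$ is a local martingale, not directly that $M^i_tM^j_t-\int_0^t 2a^{ij}\,d\eta$ is; the passage from the former to the latter requires a short integration-by-parts computation (or It\^o's product rule), which the paper carries out explicitly.
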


	\begin{proof}
	Denote
	$$
		b_{\eta} = b (s+\eta, x_{\eta}), \quad a_{\eta} = a (s + \eta, x_{\eta}).
	$$

	First, note that by the Markov property, for any $v \in C^2_0 (\bR^d)$ and \eqref{1.3.1}
	the process
	$$
		v (x_t) - v (x) -  \int_0^t L v (x_{\eta}) \, d\eta
	$$
	is a centered martingale relative to
	$
		(\tilde \cN_t, t \geq 0)
	$
	on $(\Omega, \tilde \cN_{\infty}, P_{s, x})$.
	By the standard approximation argument, for  $v = x^i$ and $v =  x^i x^j$,
	 the above process is a local martingale. 
	Then, by what was just said
	$$
		\xi^i_t: = x^i_t - \int_0^t b^i_{\eta} \, d\eta, 
	$$
	 $$
		\xi^{i j}_t : = x^i_t x^j_t - \int_0^t (b^i_{\eta}  x^j_{\eta}  + b^j_{\eta}  x^i_{\eta}) \, d\eta -  2 \int_0^t a^{i j}_{\eta}  \, d\eta
	 $$
	are local martingales.

	Next, denote
	$$
		\phi_t = \int_0^t b_{\eta}\, d\eta.
	$$
	We claim that 
	$$
		\zeta^{i j}_t: =  \xi^i_t \xi^j_t  -  2 \int_0^t a^{i j}_{\eta} \, d\eta
	$$
	is a local martingale.
	To prove this, first, we rewrite $\xi^{i j}_t$ as follows:
	$$
		\xi^{i j}_t = \xi^i_t \xi^j_t + \xi^i_t \phi^j_t + \xi^j_t \phi^i_t + \phi^i_t \phi^j_t
	$$
	 $$
		- \int_0^t (b^i_{\eta} \xi^j_{\eta}  + b^j_{\eta}  \xi^i_{\eta}) \, d\eta  - \int_0^t (b^i_{\eta} \phi^j_{\eta}  +  b^j_{\eta} \phi^i_{\eta}) \, d\eta  -   2 \int_0^t a^{i j} (s + \eta, x_{\eta}) \, d\eta.
	 $$
	Due to integration by parts the sum of the fourth and the sixth terms on the right hand side of the above expression is zero.
	Further, thanks to Lemma $3.6.15$ of \cite{Kr_95}, 
	the process 
	$$
		\xi^i_t \phi^j_t - \int_0^t \xi^i_{\eta} b^j_{\eta}  \, d\eta
	$$
	 is a local martingale. 
	Hence, $\zeta^{i j}_t$ is a  local martingales.
	By this, the fact that $\xi^i_t$ is a local martingale,
	and Theorem 3.10.8 of \cite{Kr_95}
	 $
		w_t =\int_0^t  (2a_{\eta})^{-1/2} \, d\xi_{\eta}
	$
	is a Wiener process,
	 and $P_{s, x}$ a.s. for all $t \geq 0$
	$$
		\xi_t - x  = \int_0^t  \sqrt{2 a_{\eta} } \, dw_{\eta}.
	$$
	\end{proof}

	\begin{lemma}
				\label{lemma 2.2}
	Let 
	$
		(s_0, x_0) \in \bR^{d+1}
	$,
	$r > 0$ be a number.
	Denote 
	$$
		 \hat a^{i j} (t, x) =  a^{i j} ( s_0 + r^2  t, x_0 +  r x), \quad  
		\hat b (t, x) =  r b^i ( s_0 + r^2 t, x_0 + r x)   
	$$
	and let $\hat L$ be an operator given by \eqref{1.1} with $a$ and $b$ 
	replaced by $\hat a$ and $\hat b$.
	For any open set $A \subset \bR^{d+1}$
	we denote
	$$
		\hat A = \{((t - s_0)/r^2, (x - x_0)/r): (t, x) \in A\}.
	$$
	Let
	$u$ be a harmonic  function for $Y (D_t + L)$ on $Q_r (s_0, x_0)$ 
	if and only if  
	$$
		 \hat u (s, x) = u ( s_0 +  r^2 s,  x_0  + r x) 
	$$
	is  harmonic  for some strong Markov process  $Y (D_t + \hat L)$ on $Q_1 (0, 0)$. 
	\end{lemma}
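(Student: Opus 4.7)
My plan is to exploit the SDE representation from Lemma \ref{lemma 2.1} combined with a parabolic rescaling of space-time and the corresponding Brownian scaling. The central computation is that if $x_\cdot$ solves the SDE with coefficients $(a, b)$ starting from $(s_0 + r^2 s, x_0 + rx)$, then the rescaled path $\hat x_\sigma := (x_{r^2 \sigma} - x_0)/r$ solves the SDE with coefficients $(\hat a, \hat b)$ starting from $(s, x)$, driven by the rescaled Wiener process $\tilde w_\sigma := w_{r^2 \sigma}/r$.

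Concretely, I would first apply Lemma \ref{lemma 2.1}$(i)$ on $(\Omega, \tilde \cN_\infty, P_{s_0 + r^2 s, x_0 + r x})$ to realize $x_\cdot$ as
$$x_t = x_0 + r x + \int_0^t b(s_0 + r^2 s + \eta, x_\eta)\, d\eta + \int_0^t \sqrt{2 a(s_0 + r^2 s + \eta, x_\eta)}\, dw_\eta.$$
Substituting $\eta = r^2 \rho$ in both integrals, using the standard Brownian scaling identity for stochastic integrals, and checking that $\tilde w$ is a Wiener process relative to the time-changed filtration, one obtains the claimed SDE for $\hat x$. Next, I would define $\hat P_{s, x}$ on $(\tilde \Omega, \tilde \cN_\infty)$ as the pushforward of $P_{s_0 + r^2 s, x_0 + r x}$ under the measurable map sending the canonical path $\omega$ to $\hat \omega$ with $\hat \omega^0_t = s + t$ and $\hat \omega^i_t = (\omega^i_{r^2 t} - x_0^i)/r$ for $i = 1, \ldots, d$. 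By It\^o's formula applied to $\phi \in C^2_0(\bR^{d+1})$, the SDE for $\hat x$ yields \eqref{1.3.1} with $(\hat a, \hat b)$; the time-homogeneous Markov property and strong Markov property transfer from $Y$ because the space-time rescaling is a measurable bijection of $\tilde \Omega$ that conjugates the filtrations via $t \leftrightarrow r^2 t$ and thus carries stopping times to stopping times.

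It remains to translate the harmonic-function identity. For an open $G \subset Q_r(s_0, x_0)$, the rescaled set $\hat G$ lies in $Q_1(0, 0)$, and a direct computation gives $\tau_{s_0 + r^2 s}(G) = r^2 \hat \tau_s(\hat G)$ almost surely, where $\hat \tau_s(\hat G)$ denotes the exit time from $\hat G$ for the process $\hat Y$. Combined with $x_{\tau_{s_0 + r^2 s}(G)} = x_0 + r \hat x_{\hat \tau_s(\hat G)}$, this gives
$$u\bigl(s_0 + r^2 s + \tau_{s_0 + r^2 s}(G),\, x_{\tau_{s_0 + r^2 s}(G)}\bigr) = \hat u\bigl(s + \hat \tau_s(\hat G),\, \hat x_{\hat \tau_s(\hat G)}\bigr),$$
so taking $E_{s_0 + r^2 s, x_0 + r x}$-expectation on the left and $\hat E_{s, x}$-expectation on the right shows that the harmonic identity for $u$ on $G$ is equivalent to the harmonic identity for $\hat u$ on $\hat G$. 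The converse direction is handled symmetrically by applying the construction to the inverse rescaling.

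The main obstacle is ensuring that the pushforward construction indeed produces a strong Markov process in the Dynkin sense, i.e., verifying joint measurability of $(s, x) \mapsto \hat P_{s, x}$, the correct form of the Chapman--Kolmogorov / Markov equation, and the strong Markov property with respect to the canonical filtration on $\tilde \Omega$. The SDE rescaling and the exit-time identity themselves are standard, but the bookkeeping between the two canonical probability spaces needs to be done carefully to fit the formal framework used in \cite{Dy_61}.
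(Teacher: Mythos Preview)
Your proposal is correct and follows essentially the same route as the paper: invoke Lemma~\ref{lemma 2.1} to pass to the SDE, rescale via $\hat x_\sigma=(x_{r^2\sigma}-x_0)/r$ with the Brownian scaling $w^r_\sigma=r^{-1}w_{r^2\sigma}$, apply It\^o's formula to verify \eqref{1.3.1} for $\hat L$, and then match exit times $\tau_s(A)=r^2\hat\tau$ to transfer the harmonic identity. The only cosmetic difference is that the paper keeps the rescaled process on the original probability space (defining $\hat\omega_t$ as a function of $\omega$ and setting $\hat P_{s,x}=P_{s_0+r^2 s,\,x_0+rx}$) rather than phrasing it as a pushforward, and it simply asserts that the strong Markov property carries over, whereas you flag this as the point requiring care.
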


	\begin{proof}
	For any 
	$
		\omega  = (x^0_{\cdot}, x_{\cdot})\in \tilde \Omega
	$
	 we denote
	$$
		\hat x^0_t =  r^{-2} (x^0_{r^2 t} - s_0),  \quad
		\hat x_t = r^{-1} (x_{r^2 t}  -  x_0),  \quad \hat \omega_t = (\hat x^0_t, \hat x_t),
	$$
	  $$ 
	   \hat \cN_t = \sigma (\hat\omega_{\eta}, \eta \in [0, t]),
	\quad \hat P_{s, x} = P_{s_0 +  r^2 s, x_0 + r x}.
	 $$ 
	It follows that
	the process 
	$
		 \hat X := ( \hat \omega_t, \hat\cN_t, \hat P_{s, x})
	$
	is  strong Markov.
	Below we will show that $ \hat X$ is a quasi diffusion process corresponding  to $\hat L$.

	Note that by Lemma \ref{lemma 2.1}
	$\hat x_t$
	satisfies the equation
	$$
		\hat x_t =  r^{-1} (x - x_0)  + r^{-1} \int_0^{r^2 t} b (s+\eta, x_{\eta}) \, d\eta +  r^{-1} \int_0^{r^2 t} \sqrt{2 a (s + \eta, x_{\eta}}) \, dw_{\eta}
	$$
	on 
	$
		(\Omega, \tilde \cN_{\infty}, P_{s, x}).
	$
	Due to the scaling property of a Wiener process
	$
		w^r_{\eta} := r^{-1} w_{r^2 \eta}
	$
	is a Wiener process on the same probability space, and
	$$
		\hat x_t =   r^{-1} (x - x_0)  +  r \int_0^{ t} b (s+ r^2 \eta, x_{ r^2 \eta}) \, d\eta +   \int_0^{ t } \sqrt{2 a (s + r^2 \eta, x_{ r^2 \eta}}) \, dw^r_{\eta}
	$$
	 $$
		=  r^{-1} (x - x_0) + \int_0^t \hat b ( r^{-2} (s - s_0) + \eta,    \hat x_{\eta}) + \int_0^{ t } \sqrt{2  \hat a ( r^{-2} (s  - s_0) + \eta, \hat x_{\eta}}) \, dw^r_{\eta}.
	 $$
	By It\^o's formula applied to the process 
	$
		 ( r^{-2} (s - s_0) + t, \hat x_t),
	$  
	for any 
	$
		\phi \in C^2_0 (\bR^{d+1})
	$
	 and $t \geq 0$,
	 $$
		\phi ( r^{-2} (s - s_0), r^{-1} (x - x_0)) =   E_{ s, x } \phi ( r^{-2} (s - s_0) + t, \hat x_t) 
	 $$
	  $$
		-  E_{s, x} \int_0^t  (D_t + \hat L) \phi ( r^{-2} (s - s_0) + \eta, \hat x_{\eta}) \, d\eta  
	 $$ 
	  $$
		= \hat E_{ r^{-2} (s - s_0),  r^{-1} (x - x_0)} \phi (\hat \omega_t) - \hat E_{ r^{-2} (s - s_0), r^{-1} (x - x_0)} \int_0^t (D_t +  \hat L) \phi (\hat \omega_{\eta}) \, d\eta.
	  $$
	Hence, $\hat X$ is a quasi diffusion process corresponding to $\hat L$.

	Let $u$ be any harmonic function for  $X (L)$ on $Q_r (s_0, x_0)$, 
	and 
	$
		A \subset Q_r (s_0, x_0)
	$
	be an open set.
	Then, $P_{s, x}$ a.s. we have 
	$$
		\tau_s (A) : = \inf\{t \geq 0: (s + t, x_t) \not\in A\}
	$$
	$$
		 = \inf\{t \geq 0: ( r^{-2} (s - s_0) +  r^{-2} t, r^{-1} (x_t  - x_0)) \in \not\hat  A\}
	$$
	 $$
		= r^2 \inf\{t \geq 0: (  r^{-2} (s - s_0) + t, \hat x_t) \in \not\hat A\}: = r^2 \hat \tau.
	 $$
	On the other hand, 
	$\hat \tau$ 
	coincides 	
	$
		\hat P_{  r^{-2} (s - s_0), r^{-1} (x - x_0)}
	$ 
	a.s.
	 with the first exit time from $A$  for the trajectory $\hat \omega_t$.
	By what was just said 
	we get
	$$
		 E_{s, x} u ( s + \tau_s (A),  x_{\tau_s (A)})
	$$
	 $$
		= E_{s, x} \hat u (r^{-2} (s - s_0) + \hat \tau,  r^{-1} (x_{r^2 \hat \tau}  -  x_0)) 
	$$
	$$	
	= \hat E_{ r^{-2} (s - s_0), r^{-1} x} \hat u  (\hat \omega_{\hat \tau}).
	 $$
	This combined with the fact that $u$ is harmonic  for $X (L)$ on $A$ yields
	$$
		\hat u ( r^{-2} (s - s_0), r^{-1} (x - x_0)) = \hat E_{ r^{-2} (s - s_0), r^{-1} (x - x_0)} \hat u ( \hat \omega_{\hat \tau}).
	$$

	\end{proof}

	The following lemma is a version of the so-called oblique cylinder lemma (see Lemma 9.2.1 of \cite{Kr_18}).

	\begin{lemma}
				\label{lemma 2.3}
	Let $u$ be a nonnegative harmonic function on $Q_{8, 2}$.
	Let
	 $
		 r \in (0, 2], \gamma \in (0, 1],  \varepsilon, \kappa \in (0, 1),
	$
	  $
		T \in [\kappa, \kappa^{-1}]
	  $
	be numbers
	and $x_0  \in \bR^d$ be a point such that $|x_0|  \leq \kappa^{-1}$.
	 Denote
	$
		V  = \{(t, x): t \in [0, T], |x - x_0  t/T| \leq r\}
	$ 
	and assume additionally $V \subset Q_{8, 2}$.
	Let $u$ be a nonnegative harmonic function for  $Y (D_t + L)$ on $V$.
	Then, there exist constants $n, N> 0$  depending only on $d, \nu, K, \kappa$ and $\varepsilon$ such that
	$$
		\inf_{|x| \leq  (1 - \varepsilon) r }  u (0, x) 
		  \geq N \gamma^n \inf_{|x - x_0| \leq \gamma r} u (T, x).
	$$
	If $b \equiv 0$, then the estimate holds for $r > 2$.
	\end{lemma}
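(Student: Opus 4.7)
The plan is to straighten the oblique cylinder by a time-dependent shift, use harmonicity to reduce the estimate to a uniform lower bound on the probability that a bounded-drift diffusion reaches a small target ball at the top of a straight cylinder without exiting the sides, and then obtain the $\gamma^n$-dependence by a geometric iteration.

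Set $\tilde x_t := x_t - (x_0/T)\, t$. By Lemma \ref{lemma 2.1}, $\tilde x_t$ satisfies, on $(\Omega, \tilde \cN_\infty, P_{0, y})$, an SDE with diffusion $\sqrt{2 a(t, \tilde x_t + (x_0/T) t)}$ and drift $\tilde b(t, z) := b(t, z + (x_0/T) t) - x_0/T$; in view of $T \ge \kappa$ and $|x_0| \le \kappa^{-1}$ we have $|\tilde b| \le K + \kappa^{-2}$, a bound depending only on $K$ and $\kappa$. The event that $(t, x_t) \in V$ throughout $[0, T]$ is exactly $\{\tilde x_t \in \bar B_r$ for all $t \in [0, T]\}$, and $\{|x_T - x_0| \le \gamma r\} = \{|\tilde x_T| \le \gamma r\}$. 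Writing $V_0 := [0, T] \times \bar B_r$ and $\tau_{V_0} := \inf\{t \ge 0 : (t, \tilde x_t) \notin V_0\}$, the harmonicity and nonnegativity of $u$ on $V$ give
$$
u(0, y) \ge \inf_{|w - x_0| \le \gamma r} u(T, w)\cdot \pi(y), \qquad \pi(y) := P_{0, y}\bigl(\tau_{V_0} = T,\ |\tilde x_T| \le \gamma r\bigr),
$$
so the lemma reduces to proving $\inf_{|y| \le (1-\varepsilon) r} \pi(y) \ge N \gamma^n$.

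I would first rescale via Lemma \ref{lemma 2.2} to $r = 1$; the straightened-drift bound inflates only by the factor $r \le 2$, so it remains under control in terms of $K$ and $\kappa$. At this scale I would establish the coarse estimate
$$
P_{0, y}\bigl(\tau_{V_0} = T,\ |\tilde x_T| \le 1/2\bigr) \ge c_0 > 0, \qquad |y| \le 1 - \varepsilon,
$$
by applying the parabolic Alexandrov estimate (in the spirit of Section 2.2 of \cite{Kr_77}) to a smooth nonnegative bump $\psi$ supported in $\bar B_{1/2}$: this yields a pointwise lower bound on $E_{0, y}[\psi(\tilde x_T) I_{\tau_{V_0} = T}]$, which is $\le P_{0, y}(\tau_{V_0} = T, |\tilde x_T| \le 1/2)$. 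To pass from radius $1/2$ to radius $\gamma$, I iterate: run the straightened process up to time $T - \delta$ for a fixed $\delta = \delta(\kappa) > 0$, restart from any point of $\bar B_{1/2}$, rescale via Lemma \ref{lemma 2.2} back to the configuration of the coarse estimate, and repeat. Each iteration costs a factor $c_0$ in probability and halves the admissible target radius, so after $k \sim \log_2(1/\gamma)$ steps the probability is at least $c_0^{\,k} \ge c\, \gamma^n$ with $n = -\log_2 c_0$. When $b \equiv 0$, the martingale problem for $x_t$ is scale-invariant and the straightened drift is bounded by $\kappa^{-2}$ independently of $r$, so the argument extends to $r > 2$.

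The hard part is the coarse estimate: under merely bounded measurable coefficients no pathwise support theorem is available, and the required positive lower bound must be extracted from the parabolic Alexandrov estimate applied to a carefully chosen barrier. Once this ingredient is in place, the straightening and the geometric iteration producing the $\gamma^n$ factor are essentially routine.
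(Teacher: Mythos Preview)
Your reduction is sound: straightening the slanted cylinder and using nonnegativity plus harmonicity to reduce to a lower bound on $\pi(y)=P_{0,y}(\tau_{V_0}=T,\ |\tilde x_T|\le\gamma r)$ is a legitimate alternative framing, and the geometric halving to pass from target radius $1/2$ to $\gamma$ is a standard device (though the time-splitting needs more care than you indicate: with $T\in[\kappa,\kappa^{-1}]$ you cannot simply peel off a fixed $\delta$ at each step without controlling how the rescaled time windows stay in a fixed compact interval).

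The genuine gap is the coarse estimate. The parabolic Alexandrov estimate is an \emph{upper} bound of the form $|v|\le N\|(D_t+L)v\|_{L_{d+1}}$; it does not produce a pointwise \emph{lower} bound on $E_{0,y}[\psi(\tilde x_T)I_{\tau_{V_0}=T}]$ for a bump $\psi$. What is actually needed is a \emph{subsolution} (barrier) $v\ge 0$ with $(D_t+L)v\ge 0$ in the cylinder, $v=0$ on the lateral boundary, and $v\le 1$ on the top cap $\{T\}\times B_{1/2}$, together with It\^o's formula; Alexandrov plays no role there. You allude to a ``carefully chosen barrier'' at the end, but you never construct one, and this is exactly where all the work sits.

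By contrast, the paper bypasses both the straightening and the iteration by building a single explicit barrier adapted to the oblique geometry: with $\mu=\varepsilon^2-\gamma^2$, $\xi=y-x_0$, and $\phi(t,x)=\mu(1-t/T)-|x-y+\xi t/T|^2+\gamma^2$, the function $v=\phi^2\,(\mu(1-t/T)+\gamma^2)^{-n}$ satisfies $(D_t+L)v\ge 0$ in the slanted paraboloid $\{\phi>0\}$ for $n=n(d,\nu,K,\kappa,\varepsilon)$, vanishes on the lateral boundary, and is $\le\gamma^{4-2n}$ on the top disk $\{T\}\times B_\gamma(x_0)$. It\^o's formula and harmonicity then give $v(0,y)\le\gamma^{4-2n}u(0,y)$, i.e.\ $u(0,y)\ge\varepsilon^{4-2n}\gamma^{2n-4}\cdot 1$, which is the desired $N\gamma^n$ bound in one stroke. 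The point is that the exponent $n$ and the $\gamma$-dependence are baked directly into the barrier, so no iteration is needed; your scheme would, once the coarse step is properly proved (which itself requires essentially the same barrier construction with $\gamma=1/2$), reproduce this with a worse constant.
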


	\begin{proof}
	We start by making several simplifications.
	First, by Lemma \ref{lemma 2.2},
	  we may assume that  $r = 1$.	
	Second, we may assume that 
	$
		m :  = \inf_{|x - x_0| \leq \gamma} u (T, x)  = 1.
	$
	Indeed, if $m = 0$ then the desired assertion trivially holds.
	Otherwise, we replace $u$ by $u/m$.
	Third, replacing $\gamma$ by $\gamma \varepsilon/2$ we may assume that $\gamma \leq \varepsilon/2$.
	Fourth, it suffices to prove the estimate for $|x| < 1 - \varepsilon/2$,
	and, hence, we may replace $\varepsilon$ by $\varepsilon/2$.

	Next, let 
	$
		\mu = \varepsilon^2 - \gamma^2
	$, 
	$y \in B_{1 - \varepsilon}$, 
	$\xi = y - x_0$.
	Denote
	$$
		\phi (t, x) = \mu (1 - t/T) - |x - y + \xi t/T|^2 + \gamma^2,
	$$
	 $$
		Q = \{(t, x): t \in [0, T],  \phi (t, x) > 0\}.
	 $$
	Observe that $Q \subset Q_{8, 2}$ is an open set 
	 bounded by the slanted paraboloid $\phi \equiv 0$
	and two planes $t = 0$ and $t = T$.
	 The cross sections with these planes  are $B_{\varepsilon} (y)$
	and $B_{\gamma} (x_0)$ respectively.
	Next, let  $n > 0$ be a number which we will choose later and denote
	$$
		\phi (t, x) = \mu (1 - t/T) - |x - y +  \xi t/T|^2 + \gamma^2,\quad
	$$
	 $$
		v (t, x) = \phi^2 (t, x) (\mu (1 - t/T) + \gamma^2)^{-n},
	$$
	By direct calculations 
	$$ 
		(\mu (1 - t/T) + \gamma^2)^n  (D_t + L) v (t, x) = A (t) \phi^2 (t, x) 
	$$
	 $$
		- B (t, x) \phi (t, x) + 8 a^{i j} (t, x) (x^i -  y^i + \xi^i t/T) (x^j - y^j + \xi^j t/T),
	$$
	where
	$$
		A (t) =  n \mu/T  (\mu (1  - t/T) + \gamma^2)^{-1}, 
	$$
	 $$
		B (t, x) =  2 \mu/T + (4/T) \xi^i  (x^i  -  y^i +  \xi^i t/T)  
	 $$
	  $$
		 + 4 \text{tr} a + 4 b^i (t, x) (x^i - y^i + \xi^i t/T).
	  $$ 
	Since  
	$
		|\xi^i| \leq 1 - \varepsilon + \kappa^{-1}
	$,
	and 
	  $
		\phi \leq \varepsilon^2
	$ in $Q$,
	we have
	 $$
		|B (t, x)| \leq N_1 (d, \nu, K, \varepsilon, \varkappa) \, \, \forall (t, x) \in Q.
	 $$
	Then, by what was just said, if
	$t \in [0, T]$ and
	$$
		|x - y +  \xi t/T|^2 \geq    (8  \nu)^{-1} N_1 \phi   (t, x),
	$$
	then
	one has
	$
	 	(D_t + L) v (t, x) \geq 0.
	$
	In the case 
	$$
		t \in [0, T], \, 
		|x - y +  \xi t/T|^2 \leq    (8  \nu)^{-1} N_1 \phi   (t, x)
	$$
	we have
	$$
		\mu (1 - t) + \gamma^2 \leq ( N_1 (8 \nu)^{-1} + 1)  \phi (t, x),
	$$
	and, then, since $\mu > 3\varepsilon^2/4$,
	$$
		A (t, x) \phi (t, x) - B (t, x)  > (3/4)  \, n \varepsilon^2 (\varkappa + \varkappa N_1 (8 \nu)^{-1})^{-1} - N_1.
	$$ 
	The last expression is positive
	for  
	$$
		n  = 2 +  (4/3) \, \varepsilon^{-2} N_1 (\varkappa + \varkappa N_1 (8 \nu)^{-1}).
	$$
	Hence, $(D_t + L) v \geq 0$ in $Q$.
	
	Further, 
	$$
		\max_{|x - x_0| \leq \gamma} v (T, x) \leq  \gamma^{4  - 2n}, \quad v (t, x)  = 0, \, \text{if} \, \phi (t, x) = 0, t \in (0, T),
	$$
	so that 
	$
		v \leq \gamma^{4 -  2n} u
	$ 
	on 
	$
		\partial_p Q.
	$
	Then, by  It\^o's formula and the fact that $u$ is harmonic for $Y (D_t + L)$ on $Q$ we get
	$$
		 \varepsilon^{4 - 2n} = v (0, y) \leq   E_{0, y} v (\tau_0 (Q), x_{\tau_0 (Q)}) 
	$$
	 $$
		 \leq \gamma^{4 - 2n} E_{0, y} u ( \tau_0 (Q),  x_{\tau_0 (Q)} ) = \gamma^{4  - 2n} u (0, y).
	 $$	
	Replacing $n$ by $n - 2$ we prove the desired assertion.
	\end{proof}

       \begin{lemma}[A variant of Krylov-Safonov inequality]
                                     \label{lemma 2.4}
          Let
	 $r, q, \varkappa \in (0, 1]$
	 be numbers. 
	 Fix 
	some 
	$
	 (s, x)  \in  \bar Q_{q \varkappa r^2, \varkappa r}
	$
             and let
	 $\Gamma$ be any compact subset of 
	$
	    Q_r 
	$ 
	such that
         $
		|\Gamma| > q |Q_r|.
	$

	Let $a$ and $b$ be Borel functions satisfying \eqref{1.2} and
	 $x_{\cdot}$ be a solution of \eqref{2.1.1}
	 on some probability space $(\Omega, \mathcal{F}, P)$.
	Denote 
	$
		\tau_s = \tau_s (Q_r),
	$
	 $
		\gamma_s = \tau_s (Q_r  \setminus \Gamma).
	 $
	 
          Then,
         there exists a  constant 
	$
	  \delta = \delta (d, \nu, K, q, \varkappa) > 0
	$
	such that
	     $$
                      \,           P (\gamma_s  < \tau_s ) > \delta.
             $$

	If $b \equiv 0$ then the assertion holds for $r > 1$.	
       \end{lemma}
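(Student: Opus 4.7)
The plan is to sandwich the expected occupation time
\[
I := E_{s,x}\int_0^{\tau_s}\mathbf{1}_\Gamma(s+t,x_t)\,dt
\]
between a trivial upper bound in terms of $P(\gamma_s<\tau_s)$ and a positive lower bound depending only on $d,\nu,K,q,\varkappa$. By the scaling argument of Lemma \ref{lemma 2.2} I first reduce to $r=1$: the rescaled coefficients $\hat a$ and $\hat b(t,x)=rb(s_0+r^2t,x_0+rx)$ still satisfy \eqref{1.2} because $r|b|\le K$ when $r\le 1$, so the associated operator lies in $\bL^t_{\nu,K}$ and $\tau_s\le 1$ after rescaling; in the pure-diffusion case $b\equiv 0$ the rescaling leaves $\hat b\equiv 0$ for any $r>0$, which accounts for the last sentence of the statement.

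The upper bound is easy: on the event $\{\gamma_s=\tau_s\}$ the trajectory $(s+t,x_t)$ stays in $Q_1\setminus\Gamma$ throughout $[0,\tau_s)$, so $\mathbf{1}_\Gamma(s+t,x_t)=0$ there, while $\tau_s\le 1$ on the complementary event; hence $I\le P(\gamma_s<\tau_s)$.

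The crux is the matching lower bound $I\ge \delta(d,\nu,K,q,\varkappa)$. The plan is to construct an auxiliary function $v$ on $\bar Q_1$, smooth enough to apply It\^o's formula (or obtained as a limit of smooth approximations via mollification of the coefficients and of $\mathbf{1}_\Gamma$), satisfying $v\ge 0$, $v\equiv 0$ on $\partial_p Q_1$, $v\ge c_1>0$ on all of $\bar Q_{q\varkappa,\varkappa}$, and $(D_t+L)v\ge -C_2\mathbf{1}_\Gamma$ on $Q_1$, with $c_1,C_2$ depending only on $d,\nu,K,q,\varkappa$. Granting such $v$, It\^o applied to $v(s+t\wedge\tau_s,x_{t\wedge\tau_s})$ together with $v=0$ on the parabolic exit boundary yields
\[
v(s,x)=-E\int_0^{\tau_s}(D_t+L)v(s+t,x_t)\,dt\le C_2\,I,
\]
and $v(s,x)\ge c_1$ then gives $I\ge c_1/C_2=:\delta$, which combined with the upper bound finishes the proof.

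The main obstacle is the construction of $v$: because $a^{ij},b^i$ are merely Borel measurable, the Dirichlet problem $(D_t+L)v=-\mathbf{1}_\Gamma$ on $Q_1$ with $v=0$ on $\partial_p Q_1$ has no classical solution. Following \cite{KrS_79,Kr_18} I would replace $L$ by the Pucci extremal operator $\mathcal{P}^-$ over the class $\bL^t_{\nu,K}$; any solution $v$ of $(D_t+\mathcal{P}^-)v=-\mathbf{1}_\Gamma$ automatically satisfies $(D_t+L)v\ge -\mathbf{1}_\Gamma$ pointwise, and the required lower bound $v\ge c_1$ on $\bar Q_{q\varkappa,\varkappa}$ comes from the parabolic Aleksandrov--Bakel'man--Pucci estimate together with the hypothesis $|\Gamma|>q|Q_1|$. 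This is the step where the dependence $\delta=\delta(q,\varkappa)$ appears explicitly.
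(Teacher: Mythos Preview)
Your proposal is correct and follows essentially the same route as the paper: both bound the occupation time $I=E\int_0^{\tau_s}\mathbf{1}_\Gamma(s+t,x_t)\,dt$ from above by $P(\gamma_s<\tau_s)$ (since $\tau_s\le r^2$) and from below via an auxiliary barrier $v$ solving the Bellman equation $\inf_{L'\in\Lambda^t_{\nu,K}}(D_t+L')v=-\mathbf{1}_\Gamma$ in $Q_r$ with zero parabolic boundary data, then apply the generalized It\^o formula to $v$. One small correction: the pointwise lower bound $v(s,x)\ge c_1$ on $\bar Q_{q\varkappa,\varkappa}$ is not the parabolic ABP estimate itself (ABP gives an upper bound) but rather the Krylov--Safonov ``growth lemma'' (Lemma~9.1.4 and Theorem~9.1.2 of \cite{Kr_18} in the paper), which does use ABP as an ingredient together with the hypothesis $|\Gamma|>q|Q_r|$.
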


        \begin{proof}
	Let
	$\mu_{ s, x}$ 
		 be a positive finite Borel
                  measure on $Q_r$  given by
                     $$
                            \mu_{ s, x} (A) = E_{s, x} \,  \int_0^{\tau_s}  I_{( s + t, x_t) \in A} \, ds.
                     $$     
	First, we will show that 
	\begin{equation}
				\label{2.6}
                \mu_{s, x} (\Gamma) > \delta r^2
        \end{equation}
	for some $\delta > 0$.

		Let $\Lambda^t_{\nu, K} \subset \bL^t_{\nu, K}$
		 be the subset of  operators with constant coefficients.
               We consider the following Bellman's  equation:
              \begin{equation}
					\label{2.2}
                  \inf_{ L \in    \Lambda^t_{\nu, K} }  (D_t + L) v = - I_{\Gamma}  ~ \text{in} \, Q_r, 
                 																	      \quad     v  = 0 ~ \text{on} \, \partial_p Q_r.
              \end{equation}
           	By Theorem 15.1.4 of \cite{Kr_18} 
                 there exists a unique solution 
		$
		   v \in W^{1, 2}_{d+3}  (Q_r).
		$
		By the Sobolev embedding theorem
		we conclude that
		$
			v \in C (\bar Q_r).
		$ 
	
		Further, 
        	by Lemma 4.1.5 of \cite{Kr_18} there exist
		 measurable functions  
                $
		   \tilde a, \tilde b 
		$
		satisfying  \eqref{1.2}
		such that
                           \begin{equation}
                                                          \label{2.3}
                                    D_t v  + \tilde a^{ij } D_{x^i x^j} v + \tilde b^i D_{x^i} v   =   - I_{\Gamma},   \, \text{ in} \,   Q_r, 
			\quad v  = 0 ~ \text{on} \, \partial_p Q_r. 
                           \end{equation} 
		Then,  by the maximum principle
		(see Theorem 3.1.5 of \cite{Kr_18})
                        $
				v \geq 0
			$ 
			a.e.
			in
			$
			    Q_r.
			$
			Next, 
			for 
			$
			  (t, y) \in Q_1
			$ 
			we set
			$$
				\hat v_n (t, y) =  r^{-2} v_n ( r^2 t, r y),
			$$
			 $$
			     \hat a (t, y) = \tilde a ( r^2 t, r  y),
				\quad \hat b (t, y) = r \tilde b   ( r^2 t, r y)
			 $$
			and note that $\hat v_n$ satisfies Eq. \eqref{2.3}
			with $\tilde a, \tilde b$ and $Q_r$
			replaced with
			$\hat a, \hat b$ and $Q_1$
			respectively.  	
			Observe that 
			$
				\hat b 
			$ 
			satisfies \eqref{1.2}.
			Then,
                           by Lemma 9.1.4 and Theorem 9.1.2 of \cite{Kr_18} 
                         there exists a  constant 
			$
			   \delta = \delta (d,  \nu, K,  q, \varkappa) > 0
			$
			such that
                          \begin{equation}
						\label{2.4}
				v (s, x) \geq \delta  r^{2}.
			  \end{equation}
                       
           Next, 
	by  It\^o's formula (see Theorem 2.10.1 of \cite{Kr_77})
        $$
              v (s, x) =    -  E_{s, x}   \int_0^{ \tau_s } (D_t +   L)  v ( s + t, x_t) \, dt.
        $$
	Due to \eqref{2.2} we have
	\begin{equation}
				\label{2.5}
		(D_t + L) v \geq - I_{\Gamma}, \, \, \text{a.e. in} \, \, Q,
	\end{equation}
	and, then, by this and the parabolic  Alexandrov estimate (see Theorem 2.2.4 of \cite{Kr_77})
	we conclude 
	$$
		 v (s, x) \leq \mu_{s, x} (\Gamma).
	$$
	Now \eqref{2.6} follows from \eqref{2.4}.
	
  	Further, observe that 
			  $ 
			       \tau_s \leq  r^2.
			   $
			By this and \eqref{2.6} we obtain
                                   $$
                                	    \delta r^2 < \mu_{s, x} (\Gamma) 
		=  E_{ s, x } \, I_{ \gamma_s  <  \tau_s  } \, \int_0^{\tau_s}  I_{(s + t, x_t) \in \Gamma}\, dt \leq
                                             r^2 \, P_{s, x} (\gamma_s  < \tau_s),
                                     $$
			and this proves the claim.

  \end{proof}

        \mysection{Proofs of Theorems \ref{theorem 1.1}, \ref{theorem 1.3},  \ref{theorem 1.2}, \ref{theorem 1.7}, and \ref{theorem 1.5}}                                                  
        											\label{section 3}

	We prove the theorems in the following order:
	\ref{theorem 1.5}, \ref{theorem 1.2}, 
	\ref{theorem 1.7}, \ref{theorem 1.1},
	\ref{theorem 1.3}.

	\textbf{Proof of Theorem \ref{theorem 1.5}.}
	We follow the argument of \cite{KrS_79}.

	Denote 
               $$
                     \rho( (s_1, x_1), (s_2, x_2)) = |s_1 - s_2|^{1/2} + |x_1 - x_2|.
               $$
	By  Lemma \ref{lemma 2.2} we may assume that $r = 1$ and $(t_0, x_0) = (0, 0)$.

	 We will show that for any 
          $R \in (0, 1/2]$ and a cylinder 
	$
		Q_{2R} (\tilde s_0, \tilde x_0) \subset Q_2,
	$
	\begin{equation}
				\label{3.2}
		 |u  (s_1, x_1) - u (s_2, x_2)| \leq N  R^{\alpha} \sup_{Q_{2R} (\tilde s_0, \tilde x_0) } |u|, 
			 \quad  \forall (s_i, x_i) \in Q_{2R} (\tilde s_0, \tilde x_0),
	\end{equation}
	where $N$ and $\alpha$ depend only on $d, \nu, K$.
	After that one finishes the argument as follows.
	If
	$
		\rho ((s_1, x_1), (s_2, x_2)) \leq 1/2
	$
	we obtain \eqref{3.1}
	by using  \eqref{3.2}
	 with 
	$$
		R = \rho ((s_1, x_1), (s_2, x_2)), \, \, 
	\tilde s_0 = s_1 \wedge s_2, \, \, 
		\tilde x_0 = (x_1 + x_2)/2.
 	 $$
	Otherwise,
	 \eqref{3.1} trivially holds with $N = 2^{1 + \alpha}$.

	Next, by Lemma \ref{lemma 2.2}
	 we may assume that $(\tilde s_0, \tilde x_0) = 0$.
        Replacing $u$ with 
	$
	  c_1 u + c_2, c_1, c_2 \in \bR
	$
          if necessary, we may assume that                     
         \begin{equation}
                                     \label{3.3}
         \sup_{ (s, x) \in \bar Q_R } u (s, x) = 1, \quad \inf_{ (s, x) \in \bar Q_R } u (s, x) = -1.
          \end{equation}
         \begin{equation}
              \label{3.4}
         |\{ (s, x) \in \bar Q_{2R} : u (s, x) \leq 0 \}| \geq|Q_{2R}|/2.
          \end{equation}
               Let 
               $$
		  \Gamma \subset \{ (s, x) \in \bar Q_{2R} : u (s, x) \leq 0 \}
	       $$
	 	be a compact set
		such that
	    $$	 
		 |\Gamma| > (3/8) |Q_{2R}|. 
	    $$       
             Let 
	   $
		  \delta (d, \nu, K, q, \varkappa)
	   $
	 be the constant from  Lemma  \ref{lemma 2.4}
               with 
	   $
		q = 3/8, \varkappa = 3/4, r = 2R
	   $
	and let 
	   $$
	   	 \hat Q =  [0,  9 R^2/8]  \times  \bar B_{3 R/2}
	  $$
	be the cylinder $Q_{q \varkappa r^2, \varkappa r}$  from Lemma \ref{lemma 2.4}.
             Then,  there exists a point 
	     $
		  (s, x) \in \bar Q_R
	     $
		 such that
                \begin{equation}
                                           \label{3.5}
                 u (s, x) > 1 - \delta/2.
                 \end{equation}
         	Further,  for the sake of convenience,
		we denote 
		$
		   \gamma = \tau_s (Q_{2R} \setminus \Gamma).
		$
               Thanks to the fact that $u$ is harmonic for $Y (D_t + L)$ on $Q_{2R}$ we have
                   \begin{equation}
					\label{3.7}
                    u (s, x)         =  E_{ s, x } u (s + \gamma,  x_{ \gamma} ) (I_{  \gamma = \tau_s  (Q_{2R}) }    +
                        																	 I_{ \gamma <  \tau_s  (Q_{2R}) }).
                      \end{equation}
                     Since $\Gamma$ is a closed set, 
		$
		   u (s + \gamma, x_{\gamma} ) I_{ \gamma <\tau_s (Q_{2R}) } \leq 0
		$
		  $P_{s, x}$ a.s.
               Using this and \eqref{3.5}, 
                  we obtain 
                    \begin{equation}
					\label{3.6}
                          1 - \delta/2  < u (s, x) \leq   P_{ s, x } ( \gamma  = \tau_s (Q_{2R}))  \sup_{  \bar  Q_{2R} } u
										            					      \leq (1 - \delta) \sup_{ \bar   Q_{2R} } u .
                    \end{equation}
                          The last inequality follows from Lemma \ref{lemma 2.4}
                          because 
			 $
			     (s, x) \in \bar Q_R \subset   \hat Q.
			 $
                      By \eqref{3.6} combined with \eqref{3.3} we get
                       $$ 
                       \text{osc}_{\bar Q_{2R}} u \geq 
															     N\, \text{osc}_{ \bar Q_R } \, u,
                         $$
                           where 
                           $$
                                  N = \frac{1 - 3\, \delta/4} {1 - \delta} > 1, \quad \text{osc}_{ \bar Q_R } u = 2.
                            $$
			By an iteration argument this implies \eqref{3.1}.

		  \textbf{Proof of Theorem \ref{theorem 1.2}.}
		
		Theorem \ref{theorem 1.2} is a direct corollary of Example \ref{example 1.2}
		and Theorem \ref{theorem 1.5}.

		\textbf{Proof of Theorem \ref{theorem 1.7}.}
		By the continuity theorem for characteristic functions it suffices to prove that
		for any $\xi \in \bR^d$
		$$
			\phi (s, x) = E_{s, x} \exp (i \xi^j x^j_T)
		$$
		is continuous at every point $(s, x) \in \bR^{d+1}$.
		
		Let   $|s_1 - s_2|, |x_1 - x_2| < 1$.
		Denote $h = s_2 - s_1$ and assume that $h \in (0, T)$.
		
		By  the triangle inequality we only need to estimate
		$
			I_1 =   |\phi (s_1, x_1) - \phi (s_1, x_2)|
		$ 
		and 
		$
			I_2 = |\phi (s_1, x_2) - \phi (s_2, x_2)|.
		$		
		 First,  by Theorem \ref{theorem 1.2}
	 	$$
			I_1 \leq N (d, \nu, K, r)  |x_1 - x_2|^{\alpha}. 
		 $$
		
		Next, for any $(s, x)$
		$$
			\psi (s, x) = |E_{s, x} \exp (i \xi^j x^j_T) - E_{s, x} \exp ( i \xi^j x^j_{T-h})| 
		$$
		 $$
			\leq N (d, \xi) (E_{s, x} |x_T  - x_{T - h}|^2)^{1/2}.
		 $$
		By Lemma \ref{lemma 2.1} and isometry of stochastic integral 
		\begin{equation}
					\label{3.11}
			E_{s, x} |x_T  - x_{T - h}|^2 \leq N (d,  \nu, K) (h + h^2).
		\end{equation}

		Further, by the Markov property
		$$
			\phi (s_1, x) = E_{s_1, x} E_{s_2, x_h} \exp ( i \xi^j x^j_{T - h})
		$$
		and, hence, 
		$$
			I_2 \leq   \psi (s_2, x_2) + \tilde I_2,
		$$
		where
		$$
			\tilde I_2 = 
			 |E_{s_1, x_2} (E_{s_2, x_h} \exp ( i \xi^j x^j_{T - h}) - E_{s_2, x_2} \exp ( i \xi^j x^j_{T - h}))|
		 $$
		Note that by Theorem \ref{theorem 1.2}
		  $$
			\tilde I_2 \leq N (d, \nu, K, \xi) ( E_{s_1, x_2} |x_h  -  x_2|^{\alpha}  + P_{s_1, x_2} (|x_h - x_2| \geq 1)), 
		  $$
		and, then, by Chebyshov's inequality and \eqref{3.11}
		  $$
			\tilde I_2  \leq N (h^{\alpha/2} + h^{1/2} + h).
		  $$
		This combined with the estimate of $I_1$ and $\psi (s_2, x_2)$ 
		proves the validity of the assertion.

		\textbf{Proof of Theorem \ref{theorem 1.1}.}
		For $s \in [0, T)$ and $x \in \bR^d$ we denote
		$$
			u (s, x) = E_x f (x_{T - s}).
		$$
		Thanks to the strong  Markov property for any stopping time $\tau \leq T$
		 relative to 
		$(\cN_t, t \geq 0)$
		 $$
			u (s, x) =  E_x u (s + \tau, x_{\tau}).
		 $$
		Repeating  word-for-word the  argument   of the
		proof of Theorem \ref{theorem 1.5}
		we conclude that the function
		satisfies the estimate \eqref{3.1}.
		The theorem is proved.

		\textbf{Proof of Theorem \ref{theorem 1.3}.}
		Let $\tilde P_{s, x}$ be the distribution of the process 
		$
			(s + t, x_t), t \geq 0
		$
		on 
		$
			(\tilde \Omega, \tilde \cN_{\infty}).
		$
		By Lemma \ref{lemma 2.1} and It\^o's formula
		for any 
		$\phi \in C^2_0 (\bR^{d+1})$
		  \eqref{1.3.1} holds.

		Next, by 
		by Theorem \ref{theorem 1.1} the function
		$
			\tilde E_{s, x} \exp (i \xi^j x^j_T) = E_x  \exp (i \xi^j x^j_T)
		$
		is continuous on $\bR^{d+1}$ for any $\xi \in \bR^d$,
		and, hence, by the continuity theorem 
		$\tilde P_{s, x}$ is a Feller family of probability measures. 
		Then, by Theorem 3.10 of \cite{Dy_61}
		$
			\tilde Y = (\omega_t, \infty,  \tilde \cN_t, \tilde P_{s, x})
		$
		is a strong Markov process. 
		Thus, we  constructed a strong Markov quasi diffusion process $Y (D_t + L) = \tilde Y$.

	Now we derive the desired assertion from Theorem \ref{theorem 1.5}.
	Note that for any 
	$
		x \in G: =  B_{3r/2}
	$,
	 \begin{equation}
				\label{3.10}
		u (x) =  \int_{ \partial G } u (y)  \pi_{G} (x, dy),
	 \end{equation}
	where $\pi_G (x, A)$ is defined in Example \ref{example 1.1}.
	 Hence, we only need to prove the claim for
	$
		u (x) = \pi_G (x, A), \, \, A \in \mathcal{B}^d.
	$
	Let $T > 4$ and denote
	$$
		Q_T   =  [0, T) \times G, 
			\quad A_T = [0, T) \times (A \cap \partial G).  
	$$
	Due to Example \ref{example 1.2} the function
	$$
		  v_T (s, x) : = \tilde P_{s, x} ( (s+\tau_s (Q_T), x_{\tau_s (Q_T)}) \in A_T)
		 =  P_x (x_{\tau_G} \in A \cap \partial G, \tau_G < T ) 
	$$
	is   harmonic for $Y (D_t + L)$  on $Q_T$.
	Then, by Theorem \ref{theorem 1.5} 
	\begin{equation}
				\label{3.8}
		|v_T (0, x) - v_T (0, y)| \leq N |x-y|^{\alpha}, \, x, y \in \bar B_{r/2},
	\end{equation}
	where $N$ and $\alpha$ depend only on $d, \nu$ and $K$.
	Recall that 
	for any $x \in G$ we have
	 $\tau_G < \infty$ $P_{x}$ a.s., 
	and,
	then, for any $s \geq 0$, 
	\begin{equation}
				\label{3.9}
	 	\lim_{T \to \infty} v_T (s, x) = \pi_G (x, A),  \, x \in B_R.
	\end{equation}
	Passing to the limit in \eqref{3.8} we prove the assertion.

	\mysection{Proofs of Theorems  \ref{theorem 1.4} and \ref{theorem 1.6}}
									\label{section 4}

	First we prove an auxiliary result which  first assertion 
	 is a variant of Theorem 9.5.1 of \cite{Kr_18}, 
	and the second one is a version of Lemma 9.5.3 of \cite{Kr_18}. 
	We follow the argument of \cite{Kr_18} very closely.
 
	\begin{lemma}
				\label{lemma 4.1}
	Let $q \in (0, 1]$, 
	 and $U_{q} (Q_1)$ be the 
	set of nonnegative Borel measurable functions $u$ such that
	
	\begin{itemize}
	\item there exists a cylinder $ Q_{R} (s, x) \supset \bar Q_1$, 
	an operator $ L \in \bL^t_{ \nu, K }$,
	and a strong Markov process $Y (D_t + Y)$ 
	such that $u$ is harmonic for  $Y (D_t + L)$ on $Q_R (s, x)$,  

	\item  $|\{(t, x) \in Q_{1}: u (t, x) \geq 1\}| \geq q |Q_1|$.
	\end{itemize}

	We introduce a  quantity 
	$$
		p (q)  = \inf_{ u \in U_q (Q_1)} |u (0, 0)|
	$$
	which is akin to capacity from the classical potential theory.
	Then, the following assertions hold.
	
	$(i)$ $p (q) \in (0, 1]$ and $p (q) \to 1$ as $q \to 1$.

	$(ii)$
	Let $(t_i, x_i) \in \bR^{d+1}, i = 0, 1$, 
	 $r \in (0, 1], R > 0, \kappa > 0$,
	and 
	$
		\bar Q_r (t_0, x_0) \subset Q_R (t_1, t_2)
	$
	and    $L \in \bL^t_{ \nu, K}$.
	Let
	 $v$ be a nonnegative function harmonic for $X (L)$ on
	  $Q_R (t_1, x_1)$
	such that
	$
		|\{(t, x) \in Q_{r} (t_0, x_0): v (t, x) \leq \kappa\}| \geq q |Q_r|
	$.
	Then, 
	$$
		v (t_0, x_0) \leq p (q) \kappa + (1  -  p (q)) \max_{\bar Q_r (t_0, x_0)} v.
	$$

	\end{lemma}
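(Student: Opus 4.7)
My plan is to prove part (i) first, then deduce (ii) from it by an affine transformation of the harmonic function.

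For (i), the upper bound $p(q) \leq 1$ is witnessed by the constant function $u \equiv 1$, which is harmonic for any strong Markov quasi diffusion process. To obtain $p(q) > 0$, fix any $u \in U_q(Q_1)$, and by inner regularity select a compact $\Gamma \subset \{u \geq 1\} \cap \bar Q_1$ with $|\Gamma| > q'|Q_1|$ for some $q' < q$. Applying Lemma \ref{lemma 2.4} with $r = 1$, this $q'$, and $\varkappa$ chosen so that $(0, 0) \in \bar Q_{q'\varkappa, \varkappa}$ yields $P_{0,0}(\gamma < \tau) > \delta(d, \nu, K, q', \varkappa)$, where $\tau$ and $\gamma$ are the first exit times from $Q_1$ and from $Q_1 \setminus \Gamma$. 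Since $u$ is harmonic on some cylinder strictly containing $\bar Q_1$, applying the harmonicity property to the subdomain $Q_1 \setminus \Gamma$ gives $u(0, 0) = E_{0,0} u(\omega_\gamma)$, and the bounds $u \geq 0$ everywhere and $u \geq 1$ on $\Gamma$ then force $u(0, 0) \geq P_{0,0}(\omega_\gamma \in \Gamma) > \delta$. Hence $p(q) \geq \delta > 0$.

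The convergence $p(q) \to 1$ as $q \to 1$ is the main obstacle. Following Theorem 9.5.1 of \cite{Kr_18}, I would iterate the above argument dyadically: using the strong Markov property to concatenate applications of Lemma \ref{lemma 2.4} on a nested sequence of subcylinders together with level sets $\{u \geq c_k\}$ with $c_k \uparrow 1$, one upgrades the single-step bound to $1 - p(q) \leq (1 - \delta_*)^{k(q)}$ for a universal $\delta_*$ and an iteration count $k(q) \to \infty$ as $q \to 1$. The bookkeeping in this iteration — verifying that at each stage the exit contributions from previous applications do not spoil the estimate, and correctly rescaling between steps via Lemma \ref{lemma 2.2} — is the delicate point; I would follow the PDE structure of \cite{Kr_18}, replacing Alexandrov-type estimates there by Lemma \ref{lemma 2.4}.

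For (ii), Lemma \ref{lemma 2.2} reduces to the case $(t_0, x_0) = (0, 0)$, $r = 1$, with $v$ nonnegative and harmonic on some $Q_{R^\dagger}(s^\dagger, x^\dagger) \supset \bar Q_1$. By Theorem \ref{theorem 1.5}, $v$ is continuous. For given $\eta > 0$, choose $R^* > 1$ close enough to $1$ that $\bar Q_{R^*}(0, 0) \subset Q_{R^\dagger}(s^\dagger, x^\dagger)$ and $M^* := \sup_{\bar Q_{R^*}(0, 0)} v \leq M + \eta$, where $M = \max_{\bar Q_1} v$. If $M^* \leq \kappa$, the claim is trivial; otherwise set $u := (M^* - v)/(M^* - \kappa)$. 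By linearity $u$ is harmonic on $Q_{R^*}(0, 0) \supset \bar Q_1$, it is nonnegative by construction, and it satisfies $u \geq 1$ on $\{v \leq \kappa\} \cap Q_1$, a set of measure $\geq q|Q_1|$, so $u \in U_q(Q_1)$. Part (i) then yields $u(0, 0) \geq p(q)$, equivalently $v(0, 0) \leq (1 - p(q)) M^* + p(q) \kappa \leq (1 - p(q))(M + \eta) + p(q) \kappa$. Letting $\eta \to 0^+$ completes the proof.
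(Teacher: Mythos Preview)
Your treatment of (ii) and of the lower bound $p(q)>0$ via Lemma~\ref{lemma 2.4} is fine and in line with the paper. The gap is in your argument for $p(q)\to 1$. You propose to iterate Lemma~\ref{lemma 2.4} dyadically but give no concrete scheme --- which subcylinders, which levels $c_k$, why the bound improves at each step --- and you yourself flag the bookkeeping as ``the delicate point.'' A single application of Lemma~\ref{lemma 2.4} yields only $u(0,0)\ge\delta(q)$, and there is no evident mechanism to push this toward~$1$: on smaller subcylinders the measure condition on $\{u\ge 1\}$ need not persist, so the hypothesis of Lemma~\ref{lemma 2.4} degrades rather than improves under iteration. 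Your remark that you would ``follow the PDE structure of \cite{Kr_18}, replacing Alexandrov-type estimates there by Lemma~\ref{lemma 2.4}'' is also off target: the paper follows \cite{Kr_18} closely, and that argument is \emph{not} an iteration of hitting estimates, so there is nothing of that form to transcribe.

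The paper's proof is a one-shot barrier comparison. Set $\phi(t,x)=1-t-|x|^2$ and $\tilde Q=\{(t,x)\in Q_1:\phi(t,x)>u(t,x)\}$, which is open by the continuity of $u$ furnished by Theorem~\ref{theorem 1.5} and contains $(0,0)$ whenever $u(0,0)<1$. Since $\phi\le 0$ on $\partial_p Q_1$ and $u\ge 0$, one has $\partial_p\tilde Q\subset Q_1$ with $u=\phi$ there, and $\{u\ge 1\}\subset Q_1\setminus\tilde Q$ forces $|\tilde Q|\le(1-q)|Q_1|$. Applying It\^o's formula to $\phi$ up to the exit from $\tilde Q$, using $(D_t+L)\phi\ge -N(d,\nu,K)$, harmonicity of $u$, and the parabolic Alexandrov estimate (Theorem~2.2.4 of \cite{Kr_77}) to bound the occupation time of $\tilde Q$, one obtains directly
\[
u(0,0)\ \ge\ 1-N\,(1-q)^{1/(d+1)},
\]
which is the quantitative form of $p(q)\to 1$. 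The crucial ingredient here is the Alexandrov occupation-time bound, for which Lemma~\ref{lemma 2.4} (a hitting-probability estimate) is not a substitute.
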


	\begin{proof}
	$(i)$ By definition, $p (q) \geq 0$.
		Since $u \equiv 1$ belongs to $U_q (Q_1)$, 
	we have $p (q) \leq 1$.

	Fix any 
	$ u \in U_q (Q_1)$.
	To prove the second claim we only need to show that,
	for any $\varepsilon > 0$
	there exists $q$ such that
	 $u (0, 0) \geq 1 - \varepsilon$.
	We may assume that $u (0, 0) < 1$.
	Denote 
	$$
		\phi (t, x) = 1 - t - |x|^2, \quad
		\tilde Q  = \{ (t, x) \in Q_1: \phi (t, x) > u (t, x)\}.
	$$
	By Theorem \ref{theorem 1.5}
	$u$ is a  continuous function on $\bar Q_1$, 
	and, then, $\tilde Q$ is an open set.
	Since, $1 = \phi (0, 0) > u (0, 0)$, 
	we have
	$\tilde Q \neq \emptyset$.
	Further, since $\phi \leq 0$ outside 
	of $Q_1$ and $u$ is nonnegative,
	we conclude that 
	$
		\partial_p \tilde Q \subset Q_1,
	$
	and 
	\begin{equation}
					\label{4.1}
		u  = \phi \, \, \text{on}  \,  \partial_p \tilde Q.
	\end{equation}
	By this, 
	$
		\{ (t, x) \in Q_1: u (t, x) \geq 1\} \subset  Q_1 \setminus \tilde Q,
	$
	and, hence,
	$	
		|\tilde Q| \leq (1 - q)|Q_1|.
	$

	Next, note that 
	$$
		L \phi (t, x) = -1 -  2b^i (t, x) x^i  - 2 \text{tr}\,  a  \geq   - 1 - K d - 2 d \nu^{-1}, \, (t, x) \in Q_1.
	$$
	This combined with It\^o's formula yields
	\begin{equation}
				\label{4.2}
	\begin{aligned}
		1 &= \phi (0, 0) = E_{0, 0} \phi (\tau_0 (\tilde Q), x_{\tau_0 (\tilde Q)}) - E \int_0^{\tau_0 (\tilde Q) } L \phi (t, x_t) \, dt\\
		&\leq E_{0, 0} \phi (\tau_0 (\tilde Q), x_{\tau_0 (\tilde Q)}) + N E \int_0^{\tau_0 (Q_1) } I_{ (t, x_t) \in \tilde Q } \, dt.
	\end{aligned}
	 \end{equation}
	By Theorem 2.2.4 of \cite{Kr_77} the last integral is less than
	$$
		N |\tilde Q|^{1/(d+1)} \leq N (1 - q)^{1/(d+1)} |Q_1|^{1/(d+1)}.
	$$
	By \eqref{4.1} and the fact that $u$ is harmonic for $Y (D_t + L)$ on $Q_1$ we have 
	$$
		 E_{0, 0} \phi (\tau_0 (\tilde Q), x_{\tau_0 (\tilde Q)}) = u (0, 0).
	$$
	Combining this \eqref{4.2} with  we obtain
	$$
		  u (0, 0) \geq  1  - N (1 - q)^{1/(d+1)} |Q_1|^{1/(d+1)},
	$$
	and this proves the assertion $(i)$.

	$(ii)$
	By virtue of Lemma \ref{lemma 2.2} we may assume that 
	$
		r = 1, (t_0, x_0) = (0, 0)
	$, and then, 
	$v \in U_q (Q_1)$.
	Obviously, the claim holds if 
	$\kappa \geq  \max_{Q_1} v$,
	and, hence, we may assume that 
	$\kappa < \max_{Q_1} v$.
	
	Next, denote $\tilde v = (\max_{Q_1} v - \kappa)^{-1}  (\max_{Q_1} v  - v)$ 
	and observe that 
	$\tilde v$ is a nonnegative function harmonic for $Y (D_t + L)$ on $A$ such that
	$
	   \{(t, x) \in Q_{1}: v (t, x) \leq \kappa\}| = |\{(t, x) \in Q_{1}: \tilde v (t, x) \geq 1\}. 
	$
	Since $\tilde v \in U_q (Q_1)$, we have
	$\tilde v (0, 0) \geq p (q)$, 
	and this finishes the proof.

	\textbf{Proof Theorem \ref{theorem 1.6}.}
	We repeat the proof of Theorem 9.6.1 almost verbatim.
	First, we exclude the trivial case $u (4, 0) = 0$.
	By Lemma \ref{lemma 2.2}
	 we may assume that 
	$t_0 = 0, x_0 = 0$, and $r = 1$.
	Next,  let $n$ be the constant from  Lemma \ref{lemma 2.3} with 
	   $\kappa = 1/4$ and $\varepsilon \in (0, 1]$.
	By Lemma \ref{lemma 4.1}
	 it is possible to find $q \in (0, 1)$ such that
	\begin{equation}
				\label{4.3}
		p  (q) >   (2^n - 1)(2^{n} - 1/2)^{-1}.
	\end{equation}
	We denote $Q_0 (4, 0) = (4, 0)$, and for $r \in [0, 1)$ we set
	$$
		m (r) =   \max_{ (t, x) \in \bar Q_r (4, 0)} u (t, x), 
		\quad f (r) = (1 - r)^{-n} u (4, 0).
	$$
	Obviously, $f$ is continuous on $[0, 1)$,
	 and $f (r) \to \infty$ as $r \uparrow 1$.
	The function $m$ is a continuous function on $[0, 1]$, 
	because  so is $u$  on  $\bar Q_{ 3/2} (4, 0)$ thanks to Theorem \ref{theorem 1.5}.
	This combined with the fact that $m (0) = f (0)$
	implies the existence of the greatest root of the equation $m (r) = f (r)$
	which we denote by $r_0$.
	 
	Due to the  continuity of $u$, there exists a point 
	$(t_1, x_1) \in \bar Q_{r_0} (4, 0)$ 
	such that
	$m (r_0) =  u (t_1, x_1)$.
	Observe that  
	$
		Q: = \bar Q_{(1 - r_0)/2} (t_1, x_1) \subset \bar Q_{(1 + r_0)/2} (4, 0)
	$, 
	and, since 
	 $
		(1 + r_0)/2 > r_0
	 $,
	   we get
	\begin{equation}
				\label{4.5}
	  	\max_{  Q }  u < 	2^{n}  u (4, 0) (1 -r_0)^{-n} = 2^n f (r_0).
	\end{equation}
	Denote 
	$
		\Gamma = \{(t, x) \in Q: u (t, x) \geq 2^{-1}  f (r_0)\}.
	$
	We claim that 
	\begin{equation}
					\label{4.4}
		|\Gamma|  \geq (1 - q) |Q|.
	\end{equation}
	Assume that the contrary holds. 
	Since $u$ is harmonic for $Y (D_t + L)$ on $Q_{ 8, 2}$, 
	by Lemma \ref{lemma 4.1} $(ii)$ with 
	$r = (1 - r_0)/2$,
	 $
		\kappa =  2^{-1} f (r_0)
	 $
	combined with \eqref{4.5}
	 we have
	$$
		 f (r_0) =   u (t_1, x_1) \leq p (q) 2^{-1} f (r_0) + (1 - p (q)) \max_{ Q} u 
	$$
	 $$
			\leq	( p (q) 2^{-1}  +  (1 - p (q)) 2^n)  f (r_0).
	 $$
	This contradicts \eqref{4.3}, and, hence, \eqref{4.4} holds.

	Next, denote 
	$
		\tau = \tau_{t_1}  (Q_{ 8, 2 } \setminus \Gamma).
	$
	Using the fact that $u$ is a nonnegative harmonic function combined with \eqref{4.5},
	for any 
	 $
		x \in B_{(1 - r_0)/2} (x_1)
	 $
	 we get
	$$
		 u (t_1, x) \geq E_{t_1, x} u (t_1 + \tau,   x_{\tau}) I_{\tau < \tau_{t_1} (Q_{ 8, 2 }) }
	$$
	 $$
		\geq 2^{-1} f (r_0) P_{t_1, x} ( \tau  < \tau_{t_1} (Q_{ 8, 2})).
	 $$
	 Combining this with Lemmas \ref{lemma 2.1} and \ref{lemma 2.4}
	we get
	$$
		\inf_{ x \in \bar B_{(1 - r_0)/4 }   (x_1) } u (t_1, x) \geq  \delta (1 - r_0)^{-n} 2^{-1} u (4, 0).
	$$
	Now the desired assertion is derived from Lemma \ref{lemma 2.3} in the following way:
	$$
		\inf_{ x \in \bar B_{ 2 - \varepsilon} } u (0, x) \geq N  4^{-n} (1 - r_0)^n \inf_{ x \in \bar  B_{(1 - r_0)/4 }   (x_1) } u (t_1, x)
	$$
	  $$
		\geq N \delta  2^{-3n-1} u (4, 0).
	  $$
	\end{proof}

	\textbf{Proof of Theorem \ref{theorem 1.4}.}
	It follows from \eqref{3.10}
	that it suffices to prove the theorem for 
	$
		u (x) = \pi_{ G } (x, A), 
	$
	$
		G = B_{3r/2},
	$
	 $
		A \in \mathcal{B}^d.
	 $
	
	Next, let $T > 2 (3r/2)^2$ be a number and 
	 d $v_T (s, x)$ be a
	 function defined
	in the proof of Theorem \ref{theorem 1.3}. 
	Observe that
	by Theorem \ref{theorem 1.6} we have
	$$
		v_T (r^2, x_0) \leq N (d, \nu, K)   \inf_{ x \in \bar B_{r/4} (x_0) }  v_T (0, x).
	$$
	 Now the theorem follows from \eqref{3.9}.


\begin{thebibliography}{m}

           \bibitem{AP_78} 
	S.V. Anulova,
	G. Pragarauskas,
	{\em Weak Markov solutions of stochastic equations},
	  Litovsk. Mat. Sb. 17 (1977), no. 2, 5--26, 219, in Russian;
	English translation: Lithuanian Math. J. 17 (1977), no. 2, 141--155
           
           
            \bibitem{B_98} R.F. Bass,   Diffusions and elliptic operators,
            Probability and its Applications. Springer-Verlag, New York, 1998.


    \bibitem{Dy_61}  E.B. Dynkin, 
	 Theory of Markov processes,
	 Reprint of the 1961 English translation. Dover Publications, Inc., Mineola, NY, 2006.  
       


     \bibitem{Kr_66} N.V. Krylov, 
  {\em On the regular boundary
points for Markov processes},
 Teoriya Veroyatnostei i eye
Primeneniya, Vol. 11 (1966), No. 4,  690--695 in Russian;
English translation in Theor. Probability  Appl., 
    Vol. 11 (1966), No. 4,  609--614.
    


            \bibitem{Kr_73}  N.V. Krylov,
  {\em  On the selection of a Markov process from a system
of processes and the construction of quasi-diffusion
processes},  Izvestiya Akademii Nauk SSSR, seriya
matematicheskaya, Vol. 37 (1973), No. 3, 691--708 in
Russian; English translation in Math. USSR Izvestija,
  Vol. 7 (1973),  No. 3,   691--709.
     


 	\bibitem{Kr_86} N. V. Krylov, {\em An approach to controlled diffusion processes},
	 Teor. Veroyatnost. i Primenen., 31:4 (1986), 685--709, in Russian; 
	English translation in Theory Probab. Appl., 31:4 (1987), 604--626.




	\bibitem{Kr_77} N.V. Krylov,
	 Controlled diffusion processes.  Stochastic Modelling and Applied Probability, 14. Springer-Verlag, Berlin, 2009, 308p.
     


              \bibitem{KrS_79} N.V. Krylov, M.V.  Safonov, 
               {\em An estimate for the probability of a diffusion process hitting a set of positive measure \/},
                Dokl. Akad. Nauk SSSR, Vol. 245 (1979), No. 1 ,  18--20, in Russian; 
                English translation in  Soviet Math. Dokl., Vol. 20 (1979),  253--255.
                






	\bibitem{Kr_95}
	N.V. Krylov, 
	{\em Introduction to the theory of diffusion processes},
	 Translations of Mathematical Monographs, 142, American Mathematical Society, Providence, RI, 1995.

 
       \bibitem{KrY_17} N.V. Krylov, T. Yastrzhembskiy, 
	{\em On nonequivalence of regular boundary points for second-order elliptic operators,}
	 Comm. Partial Differential Equations 42 (2017), no. 3, 366--387.

	\bibitem{Kr_18} N.V. Krylov, 
	 Sobolev and viscosity solutions for fully nonlinear elliptic and parabolic equations,
	 Mathematical Surveys and Monographs, 233. American Mathematical Society, Providence, RI, 2018.


	\bibitem{Kr_19} N.V. Krylov,
	{\em On diffusion processes with $B(\bR^2,\text{VMO})$
	 coefficients and "good" Green's functions of the corresponding operators},
		arXiv:1902.01885, 2019.

	\bibitem{Kr_20} N.V. Krylov, 
	{\em On diffusion processes with drift in $L_d$},
		arXiv:2001.04950.
	




	\bibitem{N_97} N. Nadirashvili, 
	{\em Nonuniqueness in the martingale problem and the Dirichlet problem for uniformly elliptic operators},
	 Ann. Scuola Norm. Sup. Pisa Cl. Sci. (4) 24 (1997), no. 3, 537--549.


		\bibitem{S_99} M.V. Safonov, 
	{\em Nonuniqueness for second-order elliptic equations with measurable coefficients},
	 SIAM J. Math. Anal. 30 (1999), no. 4, 879--895.

       
	\bibitem{SV} D.W. Stroock, S.R.S Varadhan,
	 Multidimensional diffusion processes. Grundlehren der Mathematischen Wissenschaften, 233, Springer-Verlag, Berlin-New York, 1979, 338 p.

            

            \end{thebibliography}
  \end{document}